\newtheorem{thm}{Theorem}[section]
\newtheorem{lem}[thm]{Lemma}
\newtheorem{prop}[thm]{Proposition}
\newtheorem{cor}[thm]{Corollary}
\theoremstyle{definition}
\newtheorem{remark}[thm]{Remark}
\newtheorem{example}[thm]{Example}
\newtheorem{question}[thm]{Question}
\newcommand{\AAA}{\mathcal{A}}
\newcommand{\BBB}{\mathcal{B}}
\newcommand{\ZZ}{\mathbb{Z}}
\newcommand{\NN}{\mathbb{N}}
\newcommand{\RR}{\mathbb{R}}
\newcommand{\ZD}{\mathbb{Z}^d}
\newcommand{\act}[2]{{#1} \curvearrowright {#2}}
\newcommand{\TotOrd}{\mathit{TotalOrd}}
\newcommand{\Ord}{\mathit{Ord}}
\newcommand{\Prob}{\mathit{Prob}}
\newcommand{\expect}{\mathbb{E}}  
\newcommand{\Borel}{\mathit{Borel}}
\newcommand{\hRok}{h^{\mathit{Rok}}}
\title{Predictability, topological entropy and invariant random orders}
\keywords{topological entropy, random invaraint orders, topological predictability, amenable groups, sofic groups.}
\subjclass[2000]{37B40, 37A35}
\author[1]{Andrei Alpeev, Tom Meyerovitch and Sieye Ryu}
\address{Andrei Alpeev,
	Chebyshev Laboratory, St. Petersburg State University, 14th Line, 29b, Saint Petersburg, 199178 Russia}
\email{a.alpeev@spbu.ru}
\address{Tom Meyerovitch,
	Department of Mathematics, Ben Gurion University of the Negev, P.O.B. 653
	Be'er Sheva 8410501, Israel.}
\email{mtom@math.bgu.ac.il}
\address{Sieye Ryu,
	Institute of Mathematics and Statistics, University of Sao Paulo, Rua do Matao 1010,  CEP 05508-090, Sao Paulo, Brazil.}
\email{sieyeryu@ime.usp.br}
\begin{document}


\maketitle
\begin{abstract}   
	We  prove  that a topologically predictable action of a countable amenable group has zero topological entropy, as conjectured by Hochman. On route,  we investigate  invariant random orders and formulate a unified Kieffer-Pinsker formula for the Kolmogorov-Sinai entropy of measure preserving actions of amenable groups. 
	We also present a proof  due to  Weiss  for the fact that topologically prime actions of sofic groups have non-positive topological sofic entropy. 
\end{abstract}

The aim of this note is to prove the following:


\begin{thm}\label{thm:hochman_predictablilty_zero_entropy}
Let $\Gamma$  be a countable amenable group that acts on
a compact metric space $X$ by homeomorphism,  and let $S \subset \Gamma$ be a subsemigroup of $\Gamma$ with $1_\Gamma \not\in S$.
If every continuous function $f \in C(X)$ is contained in the closed algebra generated by $\{f \circ s :~ s \in S\}$ and the constant functions, then the action $\act{\Gamma}{X}$ has zero topological entropy.
\end{thm}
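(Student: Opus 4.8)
The plan is to reduce the statement, via the variational principle for actions of amenable groups, to showing that $h_\mu(\act{\Gamma}{X})=0$ for every $\Gamma$-invariant Borel probability measure $\mu$ on $X$, and then to compute this Kolmogorov--Sinai entropy by means of the unified Kieffer--Pinsker formula applied to an invariant random order on $\Gamma$ built from $S$. Since $S$ is a subsemigroup with $1_\Gamma\notin S$ we have $S\cap S^{-1}=\emptyset$, so $g\prec h\iff h^{-1}g\in S$ defines a left-$\Gamma$-invariant strict partial order on $\Gamma$ with $\{g\in\Gamma: g\prec 1_\Gamma\}=S$ (and, if need be, one passes to the opposite order so that $S$ sits on the side of $1_\Gamma$ appearing in the Kieffer--Pinsker conditioning). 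The set $\mathcal{L}$ of total orders on $\Gamma$ refining $\prec$ is nonempty by Szpilrajn's order-extension theorem, closed in the compact space of total orders on $\Gamma$, and invariant under the natural $\Gamma$-action; as $\Gamma$ is amenable, $\mathcal{L}$ supports a $\Gamma$-invariant probability measure $\lambda$. Thus $\lambda$ is an invariant random order for which, $\lambda$-almost surely, $S$ is contained in the past $\{g\in\Gamma: g\prec 1_\Gamma\}$ of the identity.

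Fix $\mu$ and write $g\beta$ for the translate of a finite Borel partition $\beta$ of $X$ by $g\in\Gamma$. The unified Kieffer--Pinsker formula reads
\[
h_\mu(\act{\Gamma}{X},\beta)=\int_{\mathcal{L}} H_\mu\Bigl(\beta \,\Big|\, \bigvee_{g\prec 1_\Gamma} g\beta\Bigr)\,d\lambda(\prec),
\]
and since $S\subseteq\{g: g\prec 1_\Gamma\}$ for $\lambda$-a.e.\ $\prec$, the integrand is at most $H_\mu\bigl(\beta\mid\bigvee_{s\in S}s\beta\bigr)$, whence $h_\mu(\act{\Gamma}{X},\beta)\le H_\mu\bigl(\beta\mid\bigvee_{s\in S}s\beta\bigr)$. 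As $h_\mu(\act{\Gamma}{X})$ equals the supremum of $h_\mu(\act{\Gamma}{X},\beta)$ over any family of finite partitions cofinal under refinement, it now suffices to produce such a family for which $H_\mu\bigl(\beta\mid\bigvee_{s\in S}s\beta\bigr)=0$ for every member $\beta$ --- that is, for which the $\beta$-name of a point at the identity is $\mu$-a.s.\ determined by its $\beta$-names at the positions of $S$.

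This last vanishing is the heart of the argument and the only place where topological predictability is invoked. Using continuity of $\beta\mapsto h_\mu(\act{\Gamma}{X},\beta)$ in the Rokhlin metric together with regularity of $\mu$, it suffices to prove the vanishing for partitions of the form $\beta=\bar f^{-1}\mathcal{Q}$, where $\bar f=(f_1,\dots,f_k)$ with $f_i\in C(X)$ and $\mathcal{Q}$ is a finite partition of $\bar f(X)$ into $\bar f_{*}\mu$-continuity sets (such partitions refine, up to arbitrarily small Rokhlin error, any prescribed finite partition). Passing to the topological factor $Y$ of $\act{\Gamma}{X}$ generated by $\bar f$ --- a subshift over a compact metric alphabet $K\subseteq\mathbb{R}^k$, equipped with the pushforward measure $\nu$ of $\mu$ --- one checks that the hypothesis is inherited in full strength: every continuous function on $Y$ again lies in the closed algebra generated by its $S$-translates and the constant functions (pull the function back to $C(X)$, apply the hypothesis, and push the conclusion down along the isometric embedding $C(Y)\hookrightarrow C(X)$). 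Feeding this --- applied to a suitable family of continuous functions on $Y$ approximating the indicators of the atoms of $\mathcal{Q}$ --- into the algebra structure, and using the $\Gamma$-invariance of $\nu$ to control $L^1(\nu)$-errors at the various positions of $S$, one extracts (this is the delicate point) that the $\mathcal{Q}$-coordinate of $Y$ at the identity is $\nu$-measurable with respect to the $\mathcal{Q}$-coordinates along $S$; this is exactly the required conditional-entropy vanishing, which transports back to $H_\mu\bigl(\beta\mid\bigvee_{s\in S}s\beta\bigr)=0$. Combined with the two previous paragraphs and the variational principle, the theorem follows.

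I expect this last step to be the main obstacle. The difficulty is genuine: the analogous assertion for a \emph{single} observable is false --- one can build subshifts over a compact alphabet in which one coordinate is a continuous function of its own future and yet whose topological entropy is positive --- so the argument must use the hypothesis for \emph{all} continuous functions at once, and it must navigate carefully the interface between Borel partitions and the topology (regularity of $\mu$, $\mu$-continuity sets, and the Stone--Weierstrass description of closed unital subalgebras of $C(Y)$).
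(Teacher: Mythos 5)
Your overall strategy matches the paper's: you build the invariant partial order $\prec_S$ from $S$, extend it to an invariant random \emph{total} order via Szpilrajn's theorem plus an amenability fixed-point argument (this is exactly Lemma \ref{lem: good order}), apply the Kieffer--Pinsker formula (Theorem \ref{th: generalized kieffer}) to bound $h_\Gamma(\beta,X,\mu)$ by $H_\mu\bigl(\beta \mid \bigvee_{s\in S}\beta^s\bigr)$, and finish with the variational principle. Up to that point the argument is correct and coincides with the paper's.

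The gap is exactly where you flag it, and it is caused by your choice of the dense family of partitions. For $\beta=\bar f^{-1}\mathcal{Q}$ with $\mathcal{Q}$ a partition of $\bar f(X)$ into continuity sets, the hypothesis tells you that $\bar f(x)$ is a (closed-algebra, hence Borel) function of $(\bar f(sx))_{s\in S}$; but $\bigvee_{s\in S}\beta^s$ only records the $\mathcal{Q}$-atoms of the points $\bar f(sx)$, not their exact values, and knowing those atoms does not determine the $\mathcal{Q}$-atom of $\bar f(x)$. Approximating the indicators of the atoms of $\mathcal{Q}$ by continuous functions $h$ only yields measurability with respect to $\sigma(\{h\circ s : s\in S\})$, which again is not contained in the $\sigma$-algebra generated by $\bigvee_{s\in S}\beta^s$, so no amount of $L^1$-error control along $S$ obviously recovers $H_\mu\bigl(\beta\mid\bigvee_{s\in S}\beta^s\bigr)=0$; as you note, the single-observable version of this claim is false, and you supply no argument for the multi-observable one. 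The paper sidesteps the issue entirely with Hochman's notion of a \emph{$\mu$-continuous partition}: a partition $\alpha$ whose atoms are, mod $\mu$, the \emph{level sets} of a single $f\in C(X)$. For such $\alpha$ one has $\sigma(\alpha)=\sigma(f)$ and $\sigma(\alpha^s)=\sigma(f\circ s)$ mod $\mu$, so the hypothesis (that $f$ lies in the closed algebra generated by $\{f\circ s: s\in S\}$ and the constants) directly gives that $\alpha$ is measurable with respect to the completion of $\alpha^{S}$, i.e.\ $H_\mu(\alpha\mid\alpha^{S})=0$ with no approximation of indicators needed; Hochman's theorem that $\mu$-continuous partitions are dense in the Rokhlin metric, together with the $1$-Lipschitz continuity of $\alpha\mapsto h_\Gamma(\alpha,X,\mu)$, then closes the argument. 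Replacing your partition family by the $\mu$-continuous partitions repairs the proof and reduces it to the paper's.
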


Theorem \ref{thm:hochman_predictablilty_zero_entropy} was initially proved in the case $\Gamma=  \ZZ$ and $S = \ZZ_+$ by Kamin\'ski, Siemaszko and Szyman\'ski in \cite{MR2025310} (see also \cite{MR2162751} and  \cite{MR2186250}). In \cite{MR2873161} Hochman gave another proof and also generalized this to the case $\Gamma = \ZD$.
In the same work Hochman conjectured Theorem \ref{thm:hochman_predictablilty_zero_entropy}. 
Later, Huang, Jin and Ye in \cite{MR3530053} proved Theorem \ref{thm:hochman_predictablilty_zero_entropy}  under the additional assumption  that  $\Gamma$ is torsion-free and locally nilpotent.
It turns out that  the proof of Theorem \ref{thm:hochman_predictablilty_zero_entropy} does not involve any considerable new ideas or tools beyond those developed by Hochman in the same paper where the question had been posed. However, as suggested to us by L. Bowen, without paying too great a price, we are able to obtain more general results about predictability in the presence of  \emph{invariant random orders} on groups.
We will deduce Theorem \ref{thm:hochman_predictablilty_zero_entropy} above as a particular case of the slightly more general Theorem \ref{thm:predicatable_implies_zero_entropy_amenable} below.

For a countable group $\Gamma$ we denote by $\Ord(\Gamma) \subset \{0,1\}^{\Gamma \times \Gamma}$ the space of all orders on $\Gamma$.
The space $\Ord(\Gamma)$ is metrizable and compact, and admits a natural $\Gamma$-action.
(see Section \ref{sec: random orderings}).
Let $\nu$ be a $\Gamma$-invariant measure on $\Ord(\Gamma)$.
We will say that action $\act{\Gamma}{X}$ is \emph{$\nu$-topologically predictable relative to a topological factor map  $\pi:X \to Y$} if the following holds: For any $f \in C(X)$, and $\nu$-a.e. $\prec \in \Ord(\Gamma)$ the function $f$ is contained in the closed algebra generated by $\{ f\circ g: g \prec 1_\Gamma\}$ together with the image of  $C(Y)$ in $C(X)$ under the map  $\pi_*:C(Y) \to C(X)$.

\begin{thm}\label{thm:predicatable_implies_zero_entropy_amenable}
	Let $\Gamma$  be a countable amenable group that acts on
	two compact metric spaces $X,Y$ by homeomorphisms with a   continuous $\Gamma$-equivariant map $\pi:X \to Y$. 
	If there exists
	 a $\Gamma$-invariant probability measure $\nu$ on the space $\Ord(\Gamma)$  such that  
	$\act{\Gamma}{X}$ is $\nu$-topologically predictable relative to  $\pi:X \to Y$, then the topological entropy of the action $\act{\Gamma}{X}$ is equal to the topological entropy of the action $\act{\Gamma}{Y}$.
\end{thm}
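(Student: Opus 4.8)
\emph{Proof strategy.} Since $\pi$ exhibits $\act{\Gamma}{Y}$ as a topological factor of $\act{\Gamma}{X}$, the inequality $\htop(\act{\Gamma}{Y}) \le \htop(\act{\Gamma}{X})$ holds automatically, so the content is the reverse inequality. I would deduce it from the variational principle for actions of countable amenable groups: $\htop(\act{\Gamma}{X}) = \sup_\mu h_\mu(\act{\Gamma}{X})$ and $\htop(\act{\Gamma}{Y}) = \sup_{\mu'} h_{\mu'}(\act{\Gamma}{Y})$, the suprema over $\Gamma$-invariant Borel probability measures on $X$, resp.\ $Y$. Since every invariant $\mu$ on $X$ pushes forward under $\pi$ to an invariant measure $\pi_*\mu$ on $Y$, it suffices to prove that $h_\mu(\act{\Gamma}{X}) = h_{\pi_*\mu}(\act{\Gamma}{Y})$ for every $\Gamma$-invariant $\mu$ on $X$; and by the Abramov--Rokhlin addition formula for the extension $\pi$, namely $h_\mu(\act{\Gamma}{X}) = h_{\pi_*\mu}(\act{\Gamma}{Y}) + h_\mu(\act{\Gamma}{X}\mid \pi^{-1}\mathcal{B}_Y)$ with $\mathcal{B}_Y$ the Borel $\sigma$-algebra of $Y$, this is equivalent to the vanishing of the relative Kolmogorov--Sinai entropy $h_\mu(\act{\Gamma}{X}\mid \pi^{-1}\mathcal{B}_Y) = 0$.

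The relative entropy is controlled through the relative form of the unified Kieffer--Pinsker formula developed in the body of the paper: for a $\Gamma$-invariant $\nu$ on $\Ord(\Gamma)$ and a countably generated Borel sub-$\sigma$-algebra $\mathcal{A}$ of $X$,
\[
h_\mu\!\left(\act{\Gamma}{X}, \mathcal{A} \mid \pi^{-1}\mathcal{B}_Y\right) \;=\; \int_{\Ord(\Gamma)} H_\mu\!\left( \mathcal{A} \;\Big|\; \pi^{-1}\mathcal{B}_Y \vee \textstyle\bigvee_{g \prec 1_\Gamma} g\mathcal{A}\right) d\nu(\prec).
\]
I would now use $\nu$-topological predictability to make the integrand vanish $\nu$-a.e.\ for a cofinal family of $\mathcal{A}$'s. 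Fix a sequence $(f_n)_{n\ge1}$ dense in $C(X)$; since $X$ is compact metric, $(f_n)$ generates $\mathcal{B}_X$. For each $n$, predictability gives a $\nu$-conull set of orders $\prec$ for which $f_n$ lies in the closed subalgebra of $C(X)$ generated by $\{f_n\circ g : g \prec 1_\Gamma\}$ together with $\pi_*C(Y)$; as any closed subalgebra of $C(X)$ consists of functions measurable with respect to the $\sigma$-algebra generated by its members, $f_n$ is then measurable with respect to $\pi^{-1}\mathcal{B}_Y \vee \bigvee_{g\prec 1_\Gamma} g\,\sigma(f_n)$. Intersecting over $n$ produces one $\nu$-conull set $O \subseteq \Ord(\Gamma)$ such that, for every $\prec \in O$ and every $N$, each of $f_1,\dots,f_N$ is measurable with respect to $\pi^{-1}\mathcal{B}_Y \vee \bigvee_{g\prec 1_\Gamma} g\,\mathcal{A}_N$, where $\mathcal{A}_N := \sigma(f_1,\dots,f_N)$; that is, $\mathcal{A}_N \subseteq \pi^{-1}\mathcal{B}_Y \vee \bigvee_{g\prec 1_\Gamma} g\mathcal{A}_N$ modulo $\mu$ for all $\prec\in O$. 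The crucial point is that predictability is \emph{self-referential}: the $\prec$-past of $\mathcal{A}_N$ alone, together with the $Y$-factor, already determines $\mathcal{A}_N$. Substituting $\mathcal{A} = \mathcal{A}_N$ into the Kieffer--Pinsker formula, the conditional entropy inside the integral is $0$ for every $\prec \in O$, so $h_\mu(\act{\Gamma}{X},\mathcal{A}_N \mid \pi^{-1}\mathcal{B}_Y) = 0$ for all $N$. Since $\mathcal{A}_N \nearrow \mathcal{B}_X$, continuity of entropy from below gives $h_\mu(\act{\Gamma}{X}\mid \pi^{-1}\mathcal{B}_Y) = \sup_N h_\mu(\act{\Gamma}{X},\mathcal{A}_N\mid\pi^{-1}\mathcal{B}_Y) = 0$, which finishes the proof.

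The step I expect to require the most care is reconciling the analytic content of predictability (membership in a closed subalgebra of $C(X)$) with the measure-theoretic input demanded by the Kieffer--Pinsker formula (a $\sigma$-algebra being determined modulo $\mu$ by its $\prec$-past and by $\pi^{-1}\mathcal{B}_Y$). Concretely one must: (i) handle the almost-sure quantifier uniformly over the countable family $(f_n)$ so as to land on a single $\nu$-conull set of orders; (ii) genuinely exploit the self-referential structure — that $f_n$ is recovered from the $\prec$-past of $f_n$ itself, not merely from the $\prec$-past of all of $\mathcal{B}_X$ — which is what allows the formula to be applied to the sub-$\sigma$-algebras $\mathcal{A}_N$ rather than to an arbitrary finite partition (for a generic finite $\xi$ one only controls the past of $\mathcal{A}_N \supseteq \sigma(\xi)$, not the past of $\sigma(\xi)$); and (iii) deal, if $\nu$ is not ergodic, with its ergodic decomposition when invoking the Kieffer--Pinsker formula. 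Amenability enters only through the variational principle, the Abramov--Rokhlin formula, and (in its proof) the Kieffer--Pinsker formula, so no new ideas beyond those already in the paper are needed.
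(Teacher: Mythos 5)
Your overall skeleton --- variational principle, the Abramov--Rokhlin addition formula, and the reduction to showing $h_\Gamma(X,\mu\mid\pi^{-1}\Borel(Y))=0$ for every invariant $\mu$ --- is exactly the paper's. The gap is in the step that converts topological predictability into vanishing relative entropy. The ``Kieffer--Pinsker formula'' you invoke for a countably generated sub-$\sigma$-algebra $\mathcal{A}$ is false, and hence the inference from ``$\mathcal{A}_N\subset\pi^{-1}\Borel(Y)\vee\bigvee_{g\prec 1_\Gamma}g\mathcal{A}_N$ a.s.'' to ``$h_\Gamma(\mathcal{A}_N,X,\mu\mid\pi^{-1}\Borel(Y))=0$'' is invalid: Theorem \ref{th: generalized kieffer} is proved only for partitions of \emph{finite Shannon entropy}, and the identity genuinely lives at that level. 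A concrete counterexample to your formula: on the full $2$-shift $X=\{0,1\}^{\ZZ}$ with the $(1/2,1/2)$ Bernoulli measure, let $f(x)=\sum_{k\ge1}x_k3^{-k}$. Then $f(T^{-1}x)=\tfrac13\bigl(x_0+f(x)\bigr)$, and since $x_0+f(x)$ ranges over the two disjoint compact intervals $[0,1/2]$ and $[1,3/2]$, both $x_0$ and $f(x)$ are continuous functions of $f\circ T^{-1}$; by Stone--Weierstrass, $f$ lies in the closed algebra generated by $f\circ T^{-1}$ and the constants. Thus $\mathcal{C}=\sigma(f)$ satisfies $\mathcal{C}\subset\bigvee_{n\ge 1}\sigma(f\circ T^{-n})$ exactly, and $\mathcal{C}$ is generating, yet the factor generated by $\mathcal{C}$ is the whole $2$-shift, of entropy $\log 2$, while the right-hand side of your formula is $0$. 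So the ``self-referential'' containment at the level of the $\sigma$-algebras $\sigma(f_1,\dots,f_N)$ --- which is all your use of the hypothesis extracts --- is strictly weaker than what is needed; a finite-entropy partition $\xi\subset\mathcal{A}_N$ need not be recoverable from \emph{its own} past, and topological predictability does not apply to $\xi$ since its indicator functions are not continuous.

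The missing ingredient is Hochman's notion of $\mu$-continuous partitions: finite-Shannon-entropy partitions whose atoms are, mod $\mu$, the level sets of a single continuous function, and which are dense in the Rokhlin metric. For such an $\alpha$ with defining function $f$, predictability of $f$ gives the self-referential containment $\alpha\subset\alpha^{\Phi_\prec}\vee\pi^{-1}\Borel(Y)$ at the level of the finite-entropy partition itself, where the Kieffer--Pinsker formula legitimately yields $h_\Gamma(\alpha,X,\mu\mid\pi^{-1}\Borel(Y))=0$; density together with $1$-Lipschitz continuity of $\alpha\mapsto h_\Gamma(\alpha,X,\mu\mid\cdot)$ in the Rokhlin metric then gives $h_\Gamma(X,\mu\mid\pi^{-1}\Borel(Y))=0$, after which your Abramov--Rokhlin and variational-principle steps finish the proof as in the paper. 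Two smaller omissions: since $\nu$ is only an invariant random \emph{partial} order, one must first extend it to an invariant random total order (Lemma \ref{lem: good order}, another place where amenability enters) before an invariant random past, and hence the Kieffer--Pinsker formula, is available; on the other hand, no ergodic decomposition of $\nu$ is needed, as the formula is an identity of expectations over $\nu$ itself.
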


Theorem \ref{thm:hochman_predictablilty_zero_entropy} is a special case of Theorem \ref{thm:predicatable_implies_zero_entropy_amenable}, obtained by taking $\act{\Gamma}{Y}$ to be the trivial action on a singleton, and taking $\nu$ to be the delta measure concentrated on the order $\prec_S$ given by
\begin{equation*} 
g_1 \prec_S g_2 ~ \Leftrightarrow ~ g_1g_2^{-1} \in S.
\end{equation*}

In the last decade entropy theory has emerged for actions of non-amenable groups. Entropy for measure preserving  actions of sofic group was developed starting with the seminal paper \cite{MR2552252}. Sofic topological entropy was introduced by Kerr and Li in \cite{MR2854085}. The reader may find more details in \cite{1711.02062},  \cite{1704.06349} and \cite{MR3616077}. It is natural to ask if the above results extend to the non-amenable setting.

In Section \ref{sec:prime_actions} we include a short proof that was communicated to us by Benjy Weiss for the fact that topologically prime actions have zero entropy. Weiss's proof  uses some  similar techniques as the result about predictable systems. The result about prime systems  also applies to actions of sofic groups, with appropriate adjustments. 

{\bf Acknowledgements.} We thank Benjy Weiss for sharing his argument regarding the entropy of topologically prime systems and Lewis Bowen for insightful comments and references regarding random invariant orders. We also thank Ben Hayes for his comments on the first version of this paper. Andrei Alpeev was supported by ``Native towns'', a social investment program of PJSC ``Gazprom Neft''. Tom Meyerovitch and Sieye Ryu acknowledge support by the  Israel Science Foundation (grants no. 626/14 and 1052/18) and the The Center For Advanced Studies In Mathematics in Ben Gurion University. Tom Meyerovitch thanks the Pacific Institute for Mathematical Studies and the department of mathematics at the University of British Columbia for warm hospitality.  

\section{Preliminaries}\label{Sec:prelim}

In this paper $\act{\Gamma}{X}$ will denote  a left action of a countable group $\Gamma$ by homeomorphisms on a compact metric space $X$.
We will denote by $\Prob(X)$ the (compact, convex) space of Borel probability measures on $X$, and by $\Prob_\Gamma(X)$ the subset of $\Gamma$-invariant  Borel probability measures.
We will use the notation $\act{\Gamma}{(X,\mu)}$ to indicate that $\mu \in \Prob_\Gamma(X)$, and in this case we will say that the $\Gamma$ action on $(X,\mu)$ is measure preserving.
For two partitions $\alpha$ and $\beta$ denote their join by $\alpha \vee \beta$. 
Similarly, for two $\sigma$-algebras $\AAA$ and $\mathcal{B}$ denote by $\AAA \vee \mathcal{B}$ the smallest $\sigma$-algebra containing both.
We will say that a partition $\beta$ is finer than a partition $\alpha$ if each element of $\alpha$ is a union of elements from $\beta$.

Let $\alpha$ be a countable measurable  partition of a standard probability space $X$. We denote $H_\mu(\alpha)$ its Shannon entropy; for two partitions $\alpha$ and $\beta$ we denote $H_\mu(\alpha \vert \beta)$ the Shannon entropy of partition $\alpha$ relative to partition $\beta$. For a partition $\alpha$ and a $\sigma$-subalgebra $\AAA$ of a standard probability space, the Shannon entropy of $\alpha$ relative to $\AAA$ is given by
$$H_\mu(\alpha \vert \AAA) = -\int \log \mu \left( \alpha \mid \AAA\right)d\mu.$$ 
 If $\alpha$ is a partition of finite Shannon entropy and $\AAA$ is a $\sigma$-subalgebra, then the following holds:
\[
H_\mu(\alpha \vert \AAA ) = \inf \lbrace  H_\mu(\alpha \vert \beta), \text{ where } \beta \subseteq \AAA, H_\mu(\beta)<+\infty \rbrace;
\]
Moreover, if  $(\beta_i)_{i=1}^\infty$ is a sequence of partitions having finite Shannon entropy and
$\AAA$ is the smallest $\sigma$-subalgebra that contains each $\beta_i$ then
\[
H_\mu(\alpha \vert \AAA) = \lim_{i \to \infty} H_\mu(\alpha \vert \bigvee_{j \le i} \beta_j).
\]

We recall that countable group $\Gamma$ is  {\em amenable} if it has a F\o lner sequence, namely a sequence $(F_i)_{i=1}^\infty$ of finite subsets such that for any $g \in \Gamma$ the following holds:
\[
\lim_{i \to \infty } \frac{\lvert g F_i \setminus F_i \rvert}{\lvert F_i\rvert} = 0.
\]

It is a well known fact that a countable group  $\Gamma$ is amenable if and only if
any action of $\Gamma$ by continuous affine transformations on a compact convex subset of a separable locally convex topological vector space has a fixed point (for this and  many other equivalent conditions for amenability, see for instance \cite{MR767264}).

  For a countable  partition  $\alpha = \lbrace B_1, B_2, \ldots\rbrace$ and $g \in \Gamma$ we denote 
$$\alpha^g  = \lbrace g^{-1}(B_1), g^{-1}(B_2), \ldots\rbrace.$$
If   $F \subset \Gamma$ is finite let $\alpha^F = \bigvee_{g \in F} \alpha^g$.
This is again a countable partition.
If $F \subset \Gamma$ is infinite, we let $\alpha^F$ denote 
  the smallest $\sigma$-subalgebra containing all $\alpha^g$ for $g \in F$. 
  
  Now suppose that $(F_i)_{i=1}^\infty$ is a F\o lner sequence for the group $\Gamma$.
  We will denote the  {\em Kolmogorov-Sinai entropy}  of the partition $\alpha$ relative to a $\Gamma$-invariant $\sigma$-subalgebra $\AAA \subset \Borel(X)$ by $h_{\Gamma}(\alpha, X,\mu \vert \AAA)$. This  is defined by the formula
\[h_{\Gamma}(\alpha, X,\mu \vert \AAA)=
\lim_{i \to \infty} \frac{H_\mu(\alpha^{F_i} \vert \AAA)}{\lvert F_i\rvert}.
\]
It is known that $h_{\Gamma}(\alpha, X,\mu \vert \AAA)$ does not depend on the choice of the F\o lner sequence. The {\em Kolmogorov-Sinai entropy} of the action relative to a $\sigma$-subalgebra $\AAA$ will be denoted by $h_{\Gamma}(X,\mu \vert \AAA)$. This is 
given by
$$ h_{\Gamma}(X,\mu \vert \AAA) = \sup \lbrace h_\Gamma(\alpha, X, \mu \vert \AAA) :~ H_\mu(\alpha)<\infty \rbrace.$$
We denote  the Kolmogorov-Sinai entropy of the factor corresponding to $\AAA$ by 

\[
h_\Gamma(\AAA, X,\mu) = \sup \lbrace h_\Gamma(\alpha, X, \mu) :~ \alpha \subset \AAA, H_\mu(\alpha)<\infty \rbrace.
\]
For an invariant $\sigma$-subalgebra $\BBB$ we denote $h_\Gamma(\BBB, X, \mu \vert \AAA)$ the entropy of the factor corresponding to $\BBB$ relative to $\AAA$.

The Kolmogorov-Sinai theorem asserts that 
\[
h_\Gamma(\alpha^\Gamma \vee \AAA , X, \mu \vert \AAA) = h_\Gamma(\alpha , X, \mu \vert \AAA).
\]

The topological entropy of an action $\act{\Gamma}{X}$ of a countable amenable group $\Gamma$ on a compact space $X$ will be denoted by $h_\Gamma(X)$.


\section{Invariant random orders and invariant random pasts}\label{sec: random orderings}

In the sequel we will employ some rudimentary theory of random orders on groups. Random orders were successfully used in \cite{MR3231224} and \cite{MR2286034} to prove results concerning deterministic orders on amenable groups.
 See the book \cite{1408.5805} for background and much more. Particular  invariant random orders have been applied to entropy theory, notably Keiffer's paper \cite{MR0393422} about actions of amenable groups, and  \cite{1705.08559,austin-podder,1602.06680} for actions of countable but not necessarily amenable groups.  
 
 Consider the set $2^{\Gamma \times \Gamma}$ of binary relations on a countable group $\Gamma$, endowed with the product topology.  This topology makes $2^{\Gamma \times \Gamma}$ a compact  metrizable space.
 We will consider the left action $\act{\Gamma}{2^{\Gamma \times \Gamma}}$  given by
 $$x (g \cdot R)y \Leftrightarrow  (xg) R (yg) \mbox{ for } x,y, g \in \Gamma \mbox{ and }R \in 2^{\Gamma \times \Gamma}.$$

 Recall that a relation $\prec \in 2^{\Gamma \times \Gamma}$ on $\Gamma$ is called a {\em partial order} if the following requirements hold:
 \begin{enumerate}
 	\item It is antisymmetric, which means  that  if $x \prec y$ then  $y \not\prec x$.
 	\item It is transitive, which means that if $x \prec y$ and $y \prec z$ then $x \prec z$.
 \end{enumerate}
 A partial order $\prec$ is called {\em total}  if for any $x,y \in \Gamma$ either $x \prec y$, $y \prec x$ or $x=y$.
 Let $\Ord(\Gamma)$ denote the set of all partial orders on $\Gamma$. Denote  the set of all total orders on $\Gamma$ by $\TotOrd(\Gamma)$. It is not hard to see that both $\TotOrd(\Gamma)$ and $\Ord(\Gamma)$ are closed subsets of $2^{\Gamma \times \Gamma}$. 
 It is easy to see that $\Ord(\Gamma)$ and $\TotOrd(\Gamma)$ are $\Gamma$-invariant subsets.
 
 To a partial order $\prec$ we associate the \emph{past} (at the identity):
 
 \begin{equation*}\label{eq:Phi_prec}
 \Phi_\prec = \left\{\gamma \in\Gamma ~:~ 1_\Gamma \prec \gamma  \right\}.
 \end{equation*} 
 A  fixed point for the action $\act{\Gamma}{\Ord(\Gamma)}$ is called a (deterministic) invariant  order on $\Gamma$.
 If $\prec$ is an  invariant order, then it is straightforward to check that the associated past $\Phi_\prec \subset \Gamma$ is a semigroup that does not contain the identity $1_\Gamma$. If  $\prec \in\Ord(\Gamma)$ is a (deterministic) invariant \emph{total} order, then the  associated past $\Phi_\prec \subset \Gamma$ is an \emph{algebraic past} for $\Gamma$, namely it is a semigroup with the property that 
 \begin{equation*}
 \Gamma = \Phi_\prec \uplus \Phi_\prec^{-1} \uplus \{1_\Gamma\}.
 \end{equation*} 
 A group $\Gamma$ that admits a deterministic invariant total order (or equivalently, admits an algebraic past) is called \emph{left-orderable}.
 An \emph{invariant random order} on $\Gamma$ is a  $\Gamma$-invariant Borel probability measure on $\Ord(\Gamma)$.
 An \emph{invariant random total order} is a $\Gamma$-invariant Borel probability measure on $\TotOrd(\Gamma)$. Equivalently, it is an invariant random order that is supported on the set of total orders.
 Thus, consistently with our notation the space of invariant random orders will be denoted by
 $$ \Prob_\Gamma(\Ord(\Gamma)),$$
 and the space of invariant random total orders will be denoted by
 $$\Prob_\Gamma(\TotOrd(\Gamma)).$$
 In the sequel we will use the probabilistic convention and write ``$\prec$ is an invariant random total order with law $\nu$''
  to mean that $\prec$ is an $\Ord(\Gamma)$-valued random variable with distribution $\nu$, where 
 $$ \nu \in \Prob_\Gamma(\Ord(\Gamma)).$$
 In this case for $F \in L^1(\nu)$ we will use the notation
 $$\expect_{\prec} F(\prec) = \int F(\prec)d\nu(\prec).$$
 
 In contrast to deterministic invariant orders, every countable group $\Gamma$ admits at least one invariant random total order.
 Namely, consider the random process $(\xi_\gamma)_{\gamma \in \Gamma}$ of independent random variables such that each $\xi_\gamma$ has uniform distribution on $[0,1]$. Then each realization of this process induces an order on $\Gamma$. 

 We now define what it means for one invariant random total order to extend another:
 Let $\nu,\tilde \nu \in \Prob_\Gamma(\Ord(\Gamma))$ be invariant random  orders on $\Gamma$. Recall that a \emph{joining}  of $\nu$ and $\tilde \nu$ is a probability measure    $\lambda \in \Prob(\Ord(\Gamma)\times \Ord(\Gamma))$ that is invariant under the $\Gamma$-action on the product space and has the property that push-forward of the projection onto the first coordinate coincides with $\nu$ and the  push-forward of the projection of $\lambda$ onto the  second coordinate coincides with $\tilde \nu$.
 We say that an invariant random order $\tilde \nu$ extends $\nu$ if there exists a joining $\lambda$ of $\nu$ and $\tilde \nu$
 with the property that
  
 \begin{equation}\label{eq:ext_nu} \lambda\left( \left\{ 
 (\prec,\tilde \prec) \in \Ord(\Gamma) \times \Ord(\Gamma)~:~ 
 x \prec y \Rightarrow x \tilde \prec y
 \right\}
 \right) =1
 \end{equation}
  \begin{lem}\label{lem: good order}
 	Let  $\Gamma$ be an amenable group. Then any invariant  random order on $\Gamma$ 
 	can be extended to an invariant  random total order. 
 \end{lem}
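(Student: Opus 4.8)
To prove Lemma~\ref{lem: good order}, the plan is to obtain the desired total extension as a fixed point of the $\Gamma$-action on a suitable compact convex set of coupling measures, using the fixed-point characterization of amenability recalled in Section~\ref{Sec:prelim}.

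First I would form the space of admissible pairs
\[
P=\bigl\{(\prec,\tilde\prec)\in\Ord(\Gamma)\times\TotOrd(\Gamma)~:~x\prec y\ \Rightarrow\ x\tilde\prec y\ \text{ for all }x,y\in\Gamma\bigr\}.
\]
Since $\Ord(\Gamma)$ and $\TotOrd(\Gamma)$ are closed in $2^{\Gamma\times\Gamma}$, and, for each fixed pair $(x,y)$, the displayed implication is a clopen condition on $2^{\Gamma\times\Gamma}\times 2^{\Gamma\times\Gamma}$, the set $P$ is a closed, hence compact, subset, and it is clearly invariant under the diagonal $\Gamma$-action. The only non-formal ingredient enters at this point: by Szpilrajn's order-extension theorem, every partial order on the set $\Gamma$ is refined by some total order on $\Gamma$, so if $p$ denotes the first-coordinate projection $\Ord(\Gamma)\times\TotOrd(\Gamma)\to\Ord(\Gamma)$, then its restriction $p|_P\colon P\to\Ord(\Gamma)$ is surjective.

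Next I would pass to probability measures. Let
\[
M=\bigl\{\lambda\in\Prob(\Ord(\Gamma)\times\TotOrd(\Gamma))~:~\lambda(P)=1,\ p_*\lambda=\nu\bigr\}.
\]
This set is non-empty: since $p|_P\colon P\to\Ord(\Gamma)$ is a continuous surjection of compact metric spaces, the induced pushforward $\Prob(P)\to\Prob(\Ord(\Gamma))$ has closed convex image containing every point mass, hence equals all of $\Prob(\Ord(\Gamma))$, so some $\lambda\in\Prob(P)$ satisfies $p_*\lambda=\nu$. Moreover $M$ is convex and weak${}^{*}$-closed inside the compact space $\Prob(\Ord(\Gamma)\times\TotOrd(\Gamma))$ — being concentrated on the closed set $P$ and having a prescribed first marginal are both weak${}^{*}$-closed conditions — hence $M$ is a non-empty compact convex set. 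Because both $P$ and $\nu$ are $\Gamma$-invariant, the $\Gamma$-action on $\Prob(\Ord(\Gamma)\times\TotOrd(\Gamma))$ restricts to an action on $M$ by affine homeomorphisms.

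Finally, amenability of $\Gamma$, in the form of the fixed-point property for affine actions on compact convex subsets of a separable locally convex topological vector space, furnishes a $\Gamma$-invariant $\lambda^{*}\in M$. By construction $\lambda^{*}$ is a $\Gamma$-invariant measure concentrated on $P$ with first marginal $\nu$; letting $\tilde\nu$ be its second marginal, we get $\tilde\nu\in\Prob_\Gamma(\TotOrd(\Gamma))$, and $\lambda^{*}$ is precisely a joining of $\nu$ and $\tilde\nu$ satisfying condition~\eqref{eq:ext_nu}, so $\tilde\nu$ is an invariant random total order extending $\nu$. I expect the proof to be almost entirely soft; the main point is the reduction to a fixed-point statement, and the only genuine obstacle is making sure the coupling space $M$ is non-empty, which is exactly what the order-extension theorem provides by guaranteeing that $p|_P$ is onto.
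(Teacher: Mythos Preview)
Your proof is correct and follows essentially the same route as the paper: both arguments define the compact convex $\Gamma$-invariant set of couplings $\lambda$ supported on order-extending pairs with first marginal $\nu$, and then apply the amenable fixed-point property. Your version is slightly more detailed in justifying non-emptiness via Szpilrajn's theorem, which the paper leaves implicit.
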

 \begin{proof}
 Let $\nu \in \Prob_\Gamma(\Ord(\Gamma))$ be an invariant random order.
 Consider the set 
 $J(\nu)$ that consists 
 of Borel probability measures  $\lambda \in \Prob(\Ord(\Gamma) \times \TotOrd(\Gamma))$ whose push-forward via the projection onto the first coordinate is equal to $\nu$ and have the property that \eqref{eq:ext_nu} holds.
 Then $J(\nu)$ is a non-empty, compact convex set. Because $\nu$ is a $\Gamma$-invariant probability measure, the set $J(\nu)$ is also  invariant under the natural  
 action of  $\Gamma$. By amenability of  $\Gamma$, the  action $\act{\Gamma}{J(\nu)}$ by affine transformations admits a fixed point $\lambda \in J(\nu)$. It follows that any such fixed point is a joining  of $\nu$ with some invariant random total order $\tilde \nu$ that extends $\nu$. 
 \end{proof}
 
For torsion free locally nilpotent groups, the Rhemtulla-Formanek Theorem \cite{MR0327605,MR0311538} asserts that any \emph{deterministic} invariant order  extends to a \emph{deterministic} invariant total order. Equivalently, for this class of groups any sub-semigroup that does not contain the identity extends to an algebraic past. 
The Rhemtulla-Formanek theorem was  used in \cite{MR3530053} to prove Theorem \ref{thm:hochman_predictablilty_zero_entropy} for the class of torsion free locally nilpotent groups. Examples provided in the same paper show that the conclusion of the Rhemtulla-Formanek Theorem fails for more general groups, including  some left-orderable amenable ones.
Lemma \ref{lem: good order} can be viewed as an easy ``random substitute'' for the Rhemtulla-Formanek theorem that applies to all amenable groups
 \begin{question}
 	Does the statement of Lemma \ref{lem: good order} hold without the amenability assumption on the group?
 \end{question}

Let  $\nu \in \Prob_\Gamma(\Ord(\Gamma))$ be an invariant random order.
Recall that the action $\act{\Gamma}{X}$ is \emph{topologically $\nu$-predictable} relative  to $\pi:X \to Y$ if for any $f \in C(X)$, and $\nu$-a.e. $\prec \in \Ord(\Gamma)$ the function $f$ is contained in the closed algebra generated by $\{ f\circ g: g \prec 1_\Gamma\}$ together with the image of  $C(Y)$ in $C(X)$ under the map  $\pi_*:C(Y) \to C(X)$.

Now 
suppose $\mu \in \Prob_\Gamma(X)$ is a $\Gamma$-invariant probability measure for the action $\act{\Gamma}{(X,\mu)}$ and 
 $\nu \in \Prob_\Gamma(\Ord(\Gamma))$ is an invariant  random order. We say that the measure preserving action $\act{\Gamma}{(X,\mu)}$ is \emph{measure-theoretically $\nu$-predictable relative to $\pi:X \to Y$} if for every   countable  Borel partition $\alpha$ with $H_\mu(\alpha) < \infty$ 
we have that for $\nu$-a.e $\prec \in \Ord(\Gamma)$, the partition  $\alpha$ is measurable with respect to the $\mu$-completion of $\alpha^{\Phi_\prec} \vee \pi^{-1}(\Borel(Y))$.

Let us introduce the following random generalization for the notion of an algebraic past. An \emph{invariant random past} on $\Gamma$ is  a random function $\tilde \Phi:\Gamma \to 2^\Gamma$, or equivalently a 
Borel probability measure  on $(2^\Gamma)^\Gamma$, with the following properties:
\begin{enumerate}
	\item[(i)]  For almost every instance of $\tilde \Phi:\Gamma \to 2^\Gamma$ and for all $g \in \Gamma$ the condition
	$g \not \in \tilde \Phi(g)$ holds.
	\item[(ii)]
	For almost every instance of $\tilde \Phi:\Gamma \to 2^\Gamma$, for all $g,h \in \Gamma$, if $g \in \tilde \Phi(h)$ then $\tilde \Phi(g) \subset \tilde \Phi(h)$. 
		\item[(iii)] If $g \ne h$ then either $g \in \tilde \Phi(h)$ or $h \in \tilde \Phi(g)$.
	\item[(iv)] For all $g \in \Gamma$ the random subsets
	$\tilde \Phi(g)$ and $\tilde \Phi(1_\Gamma)g$ have the same distribution.
\end{enumerate}

It follows directly from the definitions that if  $\prec$ is an invariant random total order, then the random function given by $g \mapsto \{h \in \Gamma :~ h \prec g\}$ defines 
an invariant random past. If $\tilde \Phi$ is a random past on $\Gamma$ with law $\tilde \nu \in \Prob((2^\Gamma)^\Gamma)$ and $F \in L^1(\tilde \nu)$ we use the following probabilistic notation:

 $$\expect_{\tilde \Phi} F(\tilde \Phi)  = \int F(\tilde \Phi) d\tilde \nu(\tilde \Phi).$$

\section{The  Kieffer-Pinsker formula}\label{sec: generalized kieffer}

In this section we state and prove a simultaneous but rather straightforward generalization of  Kieffer's entropy formula \cite{MR0393422}  and of Pinsker's entropy formula for actions of amenable groups \cite[Theorem $3.1$]{MR3296581}. 
The earliest and most basic case of this formula for the group $\Gamma =\ZZ$ with the usual order goes back to Kolmogorov's very first paper \cite{kolmogorov} on entropy. 

\begin{thm}[The Kieffer-Pinsker formula]\label{th: generalized kieffer}
Let $\act{\Gamma}{(X,\mu)}$ be a probability  measure preserving action of a countable amenable group $\Gamma$,  
let $\tilde \Phi:\Gamma \to 2^\Gamma$ be an invariant random past on $\Gamma$ with $\Phi = \tilde \Phi(1_\Gamma) \subset \Gamma$. Suppose that
$\alpha$ is a Borel partition with $H_\mu(\alpha) < +\infty$ and that
 $\AAA$ is a $\Gamma$-invariant $\sigma$-algebra on $X$. Then
	the following holds: 
	\begin{equation*}
	h_\Gamma(\alpha, X, \mu\vert \AAA) = \expect_{\tilde \Phi} H_\mu(\alpha \vert \alpha ^{\Phi} \vee \AAA),
	\end{equation*}
\end{thm}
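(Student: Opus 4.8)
The plan is to reduce the formula to the classical Kieffer entropy formula (the case $\AAA$ trivial, $\tilde\Phi$ coming from an invariant random total order) by exploiting the structure of an invariant random past together with the amenability of $\Gamma$, and then to run a martingale/Følner averaging argument. First I would recall the standard identity
\[
H_\mu(\alpha^{F}\mid \AAA) = \sum_{g\in F} H_\mu\bigl(\alpha^g \mid \alpha^{F\cap \Phi_g}\vee \AAA\bigr),
\]
where $\Phi_g$ is the "past of $g$" read off from a fixed total order refining $\tilde\Phi$; this is just the chain rule for conditional Shannon entropy applied to an enumeration of $F$ compatible with the order. Dividing by $|F_i|$ for a Følner sequence $(F_i)$ and using that for each $g$ the term $H_\mu(\alpha^g\mid \alpha^{F_i\cap\Phi_g}\vee\AAA)$ is, by $\Gamma$-invariance of $\mu$ and $\AAA$, equal to $H_\mu(\alpha\mid (\alpha^{g^{-1}(F_i\cap\Phi_g)})\vee\AAA)$, one sees the average over $g\in F_i$ converges — by the monotone convergence property of conditional entropy quoted in the Preliminaries (the $\lim$ over increasing $\beta_j$'s) together with a Følner–tiling/amenability argument — to a limit governed by the "full past." The subtlety is that a deterministic $\Phi$ need not exist, which is exactly why we pass to the random past.

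The key step is therefore to integrate over the randomness. Property (iv) of an invariant random past says $\tilde\Phi(g)$ and $\tilde\Phi(1_\Gamma)g$ are equidistributed, which is precisely the statement that makes the expectation $\expect_{\tilde\Phi}H_\mu(\alpha\mid \alpha^{\tilde\Phi(g)}\vee\AAA)$ independent of $g$; combined with properties (ii) and (iii) (transitivity and totality of the induced order on almost every realization) this lets me apply the deterministic chain-rule identity realization-by-realization and then swap the finite sum over $g\in F_i$ with the expectation. I would then argue that along the Følner sequence,
\[
\frac{1}{|F_i|}\sum_{g\in F_i}\expect_{\tilde\Phi} H_\mu\bigl(\alpha\mid \alpha^{g^{-1}(F_i\cap\tilde\Phi(g))}\vee\AAA\bigr)
\longrightarrow \expect_{\tilde\Phi} H_\mu\bigl(\alpha\mid \alpha^{\Phi}\vee\AAA\bigr),
\]
where $\Phi=\tilde\Phi(1_\Gamma)$; the inner quantity is monotone in $F_i$ in the appropriate sense and bounded by $H_\mu(\alpha)<\infty$, so dominated convergence handles the interchange of limit and expectation. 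Identifying the left-hand side with $h_\Gamma(\alpha,X,\mu\mid\AAA)$ uses the definition of Kolmogorov–Sinai entropy and the independence of the Følner sequence.

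The main obstacle I anticipate is making the Følner-averaging limit rigorous in the relative setting with an infinite conditioning $\sigma$-algebra $\alpha^\Phi\vee\AAA$: one must control boundary effects (that $F_i\cap g^{-1}\tilde\Phi(g)$ "looks like" a large initial segment of the past for most $g\in F_i$) while simultaneously handling the fact that $\alpha^{\tilde\Phi(g)}$ is only a $\sigma$-algebra, not a finite partition, so all manipulations must go through the $H_\mu(\alpha\mid\AAA)=\inf_\beta H_\mu(\alpha\mid\beta)$ and monotone-limit characterizations rather than through finite partitions directly. I would isolate this as a lemma: for an invariant random past and a Følner sequence, $\frac{1}{|F_i|}H_\mu(\alpha^{F_i}\mid\AAA)\to\expect_{\tilde\Phi}H_\mu(\alpha\mid\alpha^\Phi\vee\AAA)$, proved by sandwiching between finite truncations $\tilde\Phi(g)\cap B$ for large finite $B\subset\Gamma$ and letting $B\uparrow\Gamma$ after the Følner limit. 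Everything else is bookkeeping with the chain rule and invariance.
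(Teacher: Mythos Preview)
Your proposal is correct and follows essentially the same route as the paper: the chain rule applied realization-by-realization to enumerate $F$ according to $\tilde\Phi$ (the paper's Lemma~\ref{lem:H_F_nu}), the use of property~(iv) together with $\Gamma$-invariance of $\mu$ and $\AAA$ to rewrite each summand as $\expect_{\tilde\Phi}H_\mu(\alpha\mid\alpha^{F_ig^{-1}\cap\Phi}\vee\AAA)$, and then the truncation-to-a-finite-window plus F\o lner-boundary argument you describe (the paper's Lemma~\ref{lem:D_approx_H} with the set $D$ playing the role of your $B$). The only cosmetic difference is that the paper first isolates the $\varepsilon$-approximation lemma and then runs the F\o lner limit, whereas you phrase it as a sandwiching with $B\uparrow\Gamma$ after the F\o lner limit; these are the same argument.
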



Before the proof we will establish a couple of auxiliary lemmata. We assume that $\tilde \Phi:\Gamma \to 2^\Gamma$, $\Phi= \tilde \Phi(1_\Gamma)$, $\alpha$ and $\AAA$ are as in the statement of Theorem \ref{th: generalized kieffer}.

\begin{lem}\label{lem:D_approx_H}
	For any $\varepsilon>0$ there is such a finite subset $D$ of $\Gamma$ so that 
	for any $D' \supset D$
	\begin{equation}\label{eq:H_D_prime}
	\expect_{\tilde \Phi}H_\mu(\alpha \vert \alpha^{D' \cap \Phi} \vee \AAA) \leq \expect_{\tilde \Phi}H_\mu(\alpha \vert \alpha^{\Phi} \vee \AAA) + \varepsilon. 
	\end{equation}
\end{lem}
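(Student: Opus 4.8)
The plan is to exploit the fact that the entropy of $\alpha$ relative to a $\sigma$-algebra is an infimum over finite sub-partitions, combined with the formula $H_\mu(\alpha \mid \AAA) = \lim_i H_\mu(\alpha \mid \bigvee_{j \le i} \beta_j)$ from the preliminaries whenever $\AAA$ is generated by the $\beta_i$. First I would fix $\varepsilon > 0$ and work pointwise in $\tilde\Phi$: for a fixed instance of the random past, $\Phi = \tilde\Phi(1_\Gamma)$ is a fixed (countable) subset of $\Gamma$, and $\alpha^\Phi \vee \AAA$ is the increasing union of the finite-entropy $\sigma$-algebras $\alpha^{F} \vee \AAA$ as $F$ ranges over finite subsets of $\Phi$ directed by inclusion. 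Hence $H_\mu(\alpha \mid \alpha^\Phi \vee \AAA) = \inf_{F \Subset \Phi} H_\mu(\alpha \mid \alpha^F \vee \AAA)$, and by monotonicity this infimum is a decreasing limit along any exhausting sequence $F_1 \subset F_2 \subset \cdots$ of $\Phi$. The subtlety is that I need a \emph{single} finite $D \subset \Gamma$ that works simultaneously for $\nu$-almost every $\tilde\Phi$; the naive pointwise choice of $F$ depends on $\tilde\Phi$.

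The key step is to pass the infimum through the expectation. Consider the function $\Psi(D') := \expect_{\tilde\Phi} H_\mu(\alpha \mid \alpha^{D' \cap \Phi} \vee \AAA)$ defined on finite subsets $D'$ of $\Gamma$. Since $D' \mapsto D' \cap \Phi$ is monotone and $\alpha^{D' \cap \Phi} \vee \AAA$ grows as $D'$ grows, the integrand $H_\mu(\alpha \mid \alpha^{D' \cap \Phi} \vee \AAA)$ is monotone nonincreasing in $D'$ for each fixed $\tilde\Phi$, and is bounded above by $H_\mu(\alpha) < \infty$. Directing finite subsets of $\Gamma$ by inclusion and choosing a cofinal increasing sequence $D_1 \subset D_2 \subset \cdots$ with $\bigcup_n D_n = \Gamma$, the monotone (dominated) convergence theorem gives
\begin{equation*}
\lim_{n \to \infty} \Psi(D_n) = \expect_{\tilde\Phi} \inf_{n} H_\mu(\alpha \mid \alpha^{D_n \cap \Phi} \vee \AAA) = \expect_{\tilde\Phi} H_\mu(\alpha \mid \alpha^{\Phi} \vee \AAA),
\end{equation*}
where the last equality uses that $\bigcup_n (D_n \cap \Phi) = \Phi$ together with the limit formula for conditional Shannon entropy from Section \ref{Sec:prelim}. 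Therefore there is some $n$ with $\Psi(D_n) \le \expect_{\tilde\Phi} H_\mu(\alpha \mid \alpha^\Phi \vee \AAA) + \varepsilon$; set $D := D_n$. Finally, for any $D' \supset D$ we have $D' \cap \Phi \supset D \cap \Phi$ pointwise, so $\alpha^{D' \cap \Phi} \vee \AAA \supseteq \alpha^{D \cap \Phi} \vee \AAA$ and hence $H_\mu(\alpha \mid \alpha^{D' \cap \Phi} \vee \AAA) \le H_\mu(\alpha \mid \alpha^{D \cap \Phi} \vee \AAA)$ for $\nu$-a.e. $\tilde\Phi$; integrating yields $\Psi(D') \le \Psi(D)$, which is \eqref{eq:H_D_prime}.

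The main obstacle I anticipate is the measurability and integrability bookkeeping needed to legitimately apply the convergence theorem: one must check that $\tilde\Phi \mapsto H_\mu(\alpha \mid \alpha^{D' \cap \Phi} \vee \AAA)$ is a measurable function of $\tilde\Phi$ (it depends on $\tilde\Phi$ only through the finite set $D' \cap \tilde\Phi(1_\Gamma)$, so it is a countable-valued measurable function), and that the uniform bound $0 \le H_\mu(\alpha \mid \cdot) \le H_\mu(\alpha) < \infty$ really does license dominated convergence. Everything else is a routine combination of monotonicity of conditional entropy under refinement of the conditioning $\sigma$-algebra and the standard approximation formulas already recalled in the preliminaries.
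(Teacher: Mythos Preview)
Your proposal is correct and follows essentially the same approach as the paper: choose an increasing exhaustion $(D_n)$ of $\Gamma$, use the pointwise convergence $H_\mu(\alpha \mid \alpha^{D_n \cap \Phi} \vee \AAA) \to H_\mu(\alpha \mid \alpha^{\Phi} \vee \AAA)$ (which the paper attributes to the Martingale convergence theorem and Chung's Lemma), then apply the monotone convergence theorem to the expectation over $\tilde\Phi$, and finish with monotonicity of conditional entropy for supersets $D' \supset D$. Your added remarks on measurability and the uniform bound $H_\mu(\alpha) < \infty$ are welcome bookkeeping but do not change the structure of the argument.
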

\begin{proof}
	Let us consider an arbitrary subset $\Phi \subset \Gamma$.
	Let $(D_i)_{i=1}^\infty$ be an increasing sequence of finite subsets of $\Gamma$ such that $\bigcup D_i = \Gamma$. 
	A classical argument using  the Martingale convergence theorem and Chung's Lemma (as in  \cite[Theorem $14.28$]{MR1958753}) implies that
	\begin{equation*}
	\lim_{i \to \infty} H_\mu(\alpha \vert \alpha^{D_i \cap \Phi} \vee \AAA) = H_\mu(\alpha \vert \alpha^{\Phi} \vee \AAA).
	\end{equation*}

	 Using the monotone convergence theorem it follows that for sufficiently big $i \in \NN$ 
	\begin{equation}\label{eq:H_D_i}
	\expect_{\tilde \Phi} H_\mu(\alpha \vert \alpha^{D_i \cap \Phi} \vee \AAA) \leq \expect_{\tilde \Phi} H_\mu(\alpha \vert \alpha^{\Phi} \vee \AAA) + \varepsilon.
	\end{equation}
	Choose $i \in \NN$ that satisfies \eqref{eq:H_D_i}, and let $D=D_i$. 
	By monotonicity of conditional entropy (as in  \cite[Proposition $14.18$]{MR1958753}),  for any superset $D' \supset D $ we have
	$$ 	\expect_{\tilde \Phi }H_\mu(\alpha \vert \alpha^{D' \cap \Phi} \vee \AAA) \le
	\expect_{\tilde \Phi }H_\mu(\alpha \vert \alpha^{D \cap \Phi} \vee \AAA).$$

	Thus \eqref{eq:H_D_prime} holds for any   $D' \supset D $. 
	 
\end{proof}

\begin{lem}\label{lem:H_F_nu}
	For every finite $F \subset \Gamma$ we have
	\begin{equation}\label{eq:H_F_nu}
	H_\mu\left(\alpha^F \mid  \AAA \right)=\sum_{g \in F}\expect_{\tilde \Phi}
	H_{\mu}\left(  \alpha \mid \alpha^{F g^{-1} \cap \Phi}\vee \AAA \right).
	\end{equation}
\end{lem}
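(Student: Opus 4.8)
The plan is to compute $H_\mu(\alpha^F \mid \AAA)$ by expanding it as a telescoping sum along an enumeration of $F$ that is compatible with a typical instance of the random past $\tilde\Phi$, and then to average over $\tilde\Phi$ using the invariance property (iv). Fix a typical instance of $\tilde\Phi:\Gamma\to 2^\Gamma$. Properties (i)--(iii) say that the relation $h\in\tilde\Phi(g)$ is a strict total order on $\Gamma$; restrict it to the finite set $F$ and enumerate $F=\{g_1,g_2,\dots,g_n\}$ so that $g_j\in\tilde\Phi(g_k)$ whenever $j<k$. The standard chain rule for Shannon entropy relative to a $\sigma$-algebra gives
\begin{equation*}
H_\mu(\alpha^F\mid\AAA)=\sum_{k=1}^{n} H_\mu\bigl(\alpha^{g_k}\mid \alpha^{\{g_1,\dots,g_{k-1}\}}\vee\AAA\bigr).
\end{equation*}
The first thing to check is that the left-hand side does not actually depend on the chosen enumeration, which is automatic since $\alpha^F=\bigvee_{j}\alpha^{g_j}$; so this identity holds for every instance of $\tilde\Phi$.

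Next I would rewrite each summand in a $g$-equivariant form. Using that $\mu$ is $\Gamma$-invariant and that $(\alpha^g)^{h} = \alpha^{hg}$ together with $(\AAA)^{h}=\AAA$ (as $\AAA$ is $\Gamma$-invariant), applying $g_k$ (i.e.\ pulling back by $g_k^{-1}$, or rather acting so that $\alpha^{g_k}\mapsto\alpha$) one gets
\begin{equation*}
H_\mu\bigl(\alpha^{g_k}\mid \alpha^{\{g_1,\dots,g_{k-1}\}}\vee\AAA\bigr)
= H_\mu\bigl(\alpha\mid \alpha^{\{g_1,\dots,g_{k-1}\}g_k^{-1}}\vee\AAA\bigr).
\end{equation*}
Now the key combinatorial point: by the choice of the enumeration, $\{g_1,\dots,g_{k-1}\} = F\cap\tilde\Phi(g_k)$ for each $k$ (the elements of $F$ preceding $g_k$ in the order are exactly those in $\tilde\Phi(g_k)$), and hence $\{g_1,\dots,g_{k-1}\}g_k^{-1} = (F\cap\tilde\Phi(g_k))g_k^{-1}$. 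So, summing over $k$ and re-indexing the sum as a sum over $g\in F$ (writing $g$ for $g_k$), we obtain the instance-wise identity
\begin{equation*}
H_\mu(\alpha^F\mid\AAA)=\sum_{g\in F} H_\mu\bigl(\alpha\mid \alpha^{(F\cap\tilde\Phi(g))g^{-1}}\vee\AAA\bigr),
\end{equation*}
valid for almost every instance of $\tilde\Phi$. Note the left-hand side is a constant (independent of $\tilde\Phi$), so we may take $\expect_{\tilde\Phi}$ of both sides freely.

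Finally, taking the expectation over $\tilde\Phi$ and interchanging it with the finite sum over $g\in F$, I would apply property (iv) for each fixed $g$: the random set $\tilde\Phi(g)$ has the same distribution as $\Phi g = \tilde\Phi(1_\Gamma)g$, so $(F\cap\tilde\Phi(g))g^{-1}$ has the same distribution as $(F\cap\Phi g)g^{-1} = Fg^{-1}\cap\Phi$. Since the functional $\tilde\Phi\mapsto H_\mu(\alpha\mid\alpha^{(F\cap\tilde\Phi(g))g^{-1}}\vee\AAA)$ depends only on the random set $\tilde\Phi(g)$ through the prescribed Borel map, equality in distribution yields
\begin{equation*}
\expect_{\tilde\Phi} H_\mu\bigl(\alpha\mid \alpha^{(F\cap\tilde\Phi(g))g^{-1}}\vee\AAA\bigr)
= \expect_{\tilde\Phi} H_\mu\bigl(\alpha\mid \alpha^{Fg^{-1}\cap\Phi}\vee\AAA\bigr),
\end{equation*}
which is exactly the $g$-th term of \eqref{eq:H_F_nu}. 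Summing over $g\in F$ finishes the proof. The main obstacle, and the place demanding care rather than cleverness, is the bookkeeping in the third step: making sure the enumeration really does identify the ``$\AAA$-side'' of the $k$-th chain-rule term with $\alpha^{(F\cap\tilde\Phi(g_k))g_k^{-1}}$, and checking the measurability/well-definedness so that properties (i)--(iv) can be invoked cleanly (in particular that the relevant functional is a genuine Borel function of $\tilde\Phi(g)$ alone, so that equality in law transfers to equality of expectations).
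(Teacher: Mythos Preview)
Your proposal is correct and follows essentially the same approach as the paper: enumerate $F$ compatibly with an instance of $\tilde\Phi$, apply the chain rule to identify each conditioning set with $F\cap\tilde\Phi(g)$, then take expectations and invoke property~(iv) together with $\Gamma$-invariance of $\mu$ and $\AAA$. You spell out in more detail the pull-back step $H_\mu(\alpha^{g}\mid\alpha^{F\cap\tilde\Phi(g)}\vee\AAA)=H_\mu(\alpha\mid\alpha^{(F\cap\tilde\Phi(g))g^{-1}}\vee\AAA)$ and the distributional identity $(F\cap\tilde\Phi(g))g^{-1}\overset{d}{=}Fg^{-1}\cap\Phi$, which the paper leaves implicit, but the argument is the same.
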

\begin{proof}
	Fix an instance of $\tilde \Phi:\Gamma \to 2^\Gamma$ that satisfies properties $(i)$--$(iii)$ of an invariant random past. 
	Write $F$ as 
	$F = \{ g_1, \ldots , g_{|F|}\}$ ordered so that $g_j \in \tilde \Phi( g_i)$ iff $i < j$. Applying the chain  rule for entropy (as in  \cite[Proposition $14.18$]{MR1958753}) we have:
		$$H_\mu\left(\alpha^{F} \mid  \AAA \right)=
	\sum_{i=1}^{|F|}  H_{\mu}\left( \alpha^{g_i} \mid \bigvee_{j < i} \alpha^{g_j} \vee \AAA \right) 
	$$
		This can be rewritten as :
		\[
	H_\mu(\alpha^{F} \vert \AAA) = \sum_{g \in F} H_\mu ( \alpha^g \vert  \alpha^{F \cap (\tilde \Phi(g))} \vee \AAA). 
	\]
	Taking the expectation over  $\tilde \Phi$,  using property $(iv)$ of an invariant random  past and linearity of expectation we get \eqref{eq:H_F_nu}.

\end{proof}

\begin{proof}[Proof of Theorem \ref{th: generalized kieffer}]

	Let $(F_i)_{i=1}^\infty$ be a F\o lner sequence in $\Gamma$.

	By Lemma \ref{lem:H_F_nu} we have:
	\begin{equation}\label{eq: partial kieffer}
	H_\mu(\alpha^{F_i}\vert \AAA) = \sum_{g \in F_i} \expect_{\tilde \Phi} H_\mu(\alpha \vert \alpha^{F_i g^{-1} \cap \Phi} \vee \AAA).
	\end{equation}
	Choose any  $\varepsilon>0$, and let $D \subset \Gamma$ as in 
     Lemma \ref{lem:D_approx_H}. Then for any $D' \supset D$ we have  that \[
	\expect_{\tilde \Phi}H_\mu(\alpha \vert \alpha^\Phi \vee \AAA)\leq \expect_{\tilde \Phi}H_\mu(\alpha \vert \alpha^{D' \cap \Phi} \vee \AAA) \leq \expect_{\tilde \Phi}H_\mu(\alpha \vert \alpha^{\Phi} \vee \AAA) + \varepsilon 
	\]
	Let $\partial_D F_i$ denote the set of  $g \in F_i$ such that $D \not\subset F_i g^{-1}$.   
	From the definition of a F\o lner sequence we can derive that 
	\[
	\lim_{i \to \infty} \frac{\lvert \partial_D F_i\rvert}{\lvert F_i\rvert} = 0.
	\]
	For any $g \in F_i\setminus \partial_D F_i$ we will have 
	\[
	\expect_{\tilde \Phi}H_\mu(\alpha \vert \alpha^{\Phi} \vee \AAA) \leq \expect_{\tilde \Phi}H_\mu(\alpha \vert \alpha^{F_i g^{-1} \cap \Phi} \vee \AAA) \leq \expect_{\tilde \Phi}H_\mu(\alpha \vert \alpha^{\Phi} \vee \AAA) + \varepsilon 
	\]
	The latter together with equation \eqref{eq: partial kieffer} implies that
	\[
	\expect_{\tilde \Phi}H_\mu(\alpha \vert \alpha^{\Phi} \vee \AAA)\leq \lim_{i \to \infty} \frac{H_\mu(\alpha^{F_i} \vert \AAA)}{\lvert F_i\rvert} \leq \expect_{\tilde \Phi}H_\mu(\alpha \vert \alpha^{\Phi} \vee \AAA) + \varepsilon.
	\]
	This finishes the proof since $\varepsilon>0$ is arbitrary.
\end{proof}
\begin{cor}\label{cor:amenable_predictability_iff_zero_entropy}
	Let $\Gamma$ be a countable amenable group. Then a probability measure preserving action $\act{\Gamma}{(X,\mu)}$ 
	has zero Kolmogorov-Sinai entropy relative to a factor map $\pi:X \to Y$ if and only if it is measure-theoretically $\nu$-predictable relative to $\pi:X \to Y$ with respect to some (hence any) invariant random  total order.
\end{cor}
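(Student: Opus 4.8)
\textbf{Proof proposal for Corollary \ref{cor:amenable_predictability_iff_zero_entropy}.}

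The plan is to derive the corollary directly from the Kieffer-Pinsker formula (Theorem \ref{th: generalized kieffer}) together with the Kolmogorov-Sinai theorem, by matching up the two conditions partition-by-partition. Fix an invariant random total order $\prec$ with law $\nu$, and let $\tilde\Phi$ be the associated invariant random past, so $\Phi = \tilde\Phi(1_\Gamma) = \Phi_\prec$. Let $\AAA = \pi^{-1}(\Borel(Y))$, which is a $\Gamma$-invariant $\sigma$-algebra. For a partition $\alpha$ with $H_\mu(\alpha)<\infty$, Theorem \ref{th: generalized kieffer} gives
\[
h_\Gamma(\alpha, X, \mu \vert \AAA) = \expect_{\prec} H_\mu(\alpha \vert \alpha^{\Phi_\prec} \vee \AAA).
\]
Now $h_\Gamma(X,\mu\vert\AAA) = \sup_\alpha h_\Gamma(\alpha,X,\mu\vert\AAA)$ over finite-entropy partitions, so the action has zero relative entropy iff the left-hand side vanishes for every such $\alpha$. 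Since the integrand $H_\mu(\alpha\vert\alpha^{\Phi_\prec}\vee\AAA)$ is nonnegative, the expectation is zero iff it vanishes for $\nu$-a.e.\ $\prec$; and $H_\mu(\alpha \vert \alpha^{\Phi_\prec} \vee \AAA) = 0$ is precisely the statement that $\alpha$ is measurable with respect to the $\mu$-completion of $\alpha^{\Phi_\prec}\vee\AAA$. Thus "zero relative entropy" unpacks to exactly the definition of measure-theoretic $\nu$-predictability relative to $\pi$.

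The only genuinely non-formal point is the quantifier juggling between "for every finite-entropy partition $\alpha$, for $\nu$-a.e.\ $\prec$" (predictability) and "for $\nu$-a.e.\ $\prec$, for every finite-entropy partition $\alpha$" — but this is not actually needed, since both the definition of predictability and the characterization of zero entropy are stated in the "$\forall\alpha\,\exists$-a.e.\,$\prec$" order, so one simply applies the equivalence $H_\mu(\alpha\vert\alpha^{\Phi_\prec}\vee\AAA)=0 \iff \alpha \subset \overline{\alpha^{\Phi_\prec}\vee\AAA}$ inside the integral and is done. For the phrase "some (hence any)", note the argument above works verbatim for any invariant random total order $\nu$ — the Kieffer-Pinsker formula holds for all of them — so zero relative entropy is equivalent to $\nu$-predictability for every $\nu \in \Prob_\Gamma(\TotOrd(\Gamma))$; in particular if it holds for one it holds for all. (Here one uses that $\Prob_\Gamma(\TotOrd(\Gamma))$ is non-empty by the i.i.d.\ construction recalled in Section \ref{sec: random orderings}.)

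The main obstacle, such as it is, is bookkeeping: confirming that the pointwise identity $H_\mu(\alpha\vert\alpha^{\Phi_\prec}\vee\AAA) = 0 \Leftrightarrow \alpha$ is $\overline{\alpha^{\Phi_\prec}\vee\AAA}$-measurable holds for countable partitions of finite entropy (standard: conditional entropy of a partition given a $\sigma$-algebra vanishes iff the partition is contained in the completed $\sigma$-algebra), and that the $\nu$-measurability issues in "$\nu$-a.e.\ $\prec$" are harmless because $\prec \mapsto H_\mu(\alpha\vert\alpha^{\Phi_\prec}\vee\AAA)$ is measurable (which is implicit already in the statement of Theorem \ref{th: generalized kieffer}, where this function is integrated). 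No new ideas beyond Theorem \ref{th: generalized kieffer} are required.
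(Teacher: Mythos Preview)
Your proposal is correct and follows essentially the same approach as the paper: apply the Kieffer--Pinsker formula, use nonnegativity of the integrand to pass from vanishing expectation to $\nu$-a.e.\ vanishing, and identify $H_\mu(\alpha\mid\alpha^{\Phi_\prec}\vee\AAA)=0$ with measurability of $\alpha$ in the $\mu$-completion of $\alpha^{\Phi_\prec}\vee\AAA$. Your discussion of the ``some (hence any)'' clause and the quantifier order is a helpful elaboration on points the paper leaves implicit, but the underlying argument is the same.
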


\begin{proof}
	An action  $\act{\Gamma}{(X,\mu)}$ has zero Kolmogorov-Sinai entropy if and only if for any finite measurable partition $\alpha$ we have
	$$ h_\Gamma(\alpha, X, \mu\vert \AAA) =0.$$
	By the Kieffer-Pinkser formula, for any invariant random past this is equivalent to 
	$$  \expect_{\tilde \Phi} H_\mu(\alpha \vert \alpha ^{\Phi} \vee \AAA) =0,$$
	which is equivalent to having $H_\mu (\alpha \vert \alpha ^{\Phi} \vee \AAA) =0$ for a.e. realization of $\tilde \Phi$. 
	This is equivalent to the statement that $\alpha$ is measurable with respect to the $\mu$-completion of $\alpha ^{\Phi} \vee \AAA$.
\end{proof}

Furthermore, predictability of a relative generator with respect to an invariant  random partial order implies zero relative entropy:
\begin{prop}\label{thm:measure_predictible_generator_amenable}
	Let $\act{\Gamma}{(X,\mu)}$ be a measure preserving action of a countable amenable group $\Gamma$ and let $\AAA$ be a $\Gamma$-invariant $\sigma$-subalgebra. Suppose that $\prec$ is an invariant random partial order on   $\Gamma$.
	Let $\alpha$ be a finite Shannon entropy partition of $X$ such that $\alpha \subset \AAA \vee \alpha^{\Phi_{\prec}}$ for almost every  instance of $\prec$ and 
	\begin{equation}\label{eq:alpha_relative_generator_def}
	\alpha^\Gamma \vee \AAA = \Borel(X) \mod \mu.
	\end{equation}
	Then
	$h_\Gamma( X, \mu \vert \AAA ) = 0$.
	
\end{prop}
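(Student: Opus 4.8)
The plan is to reduce Proposition~\ref{thm:measure_predictible_generator_amenable} to Corollary~\ref{cor:amenable_predictability_iff_zero_entropy} by showing that measure-theoretic $\nu$-predictability of a single finite relative generator $\alpha$ forces measure-theoretic $\nu$-predictability of \emph{every} finite partition (relative to $\AAA$), where $\nu$ is the law of $\prec$. Once that is established, Corollary~\ref{cor:amenable_predictability_iff_zero_entropy} immediately gives $h_\Gamma(X,\mu\vert\AAA)=0$. Equivalently, and perhaps more transparently, I would argue directly at the level of the Kieffer--Pinsker formula: it suffices to prove $h_\Gamma(\beta,X,\mu\vert\AAA)=0$ for every finite partition $\beta$, and by the Kieffer--Pinsker formula (Theorem~\ref{th: generalized kieffer}), choosing the invariant random past $\tilde\Phi(g)=\{h:h\prec g\}$ induced by $\prec$, this amounts to showing $\expect_\prec H_\mu(\beta\mid\beta^{\Phi_\prec}\vee\AAA)=0$, i.e.\ $\beta$ is measurable with respect to the $\mu$-completion of $\beta^{\Phi_\prec}\vee\AAA$ for a.e.\ $\prec$.

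The first and main step is the generator case: I claim that for a.e.\ instance of $\prec$, the generating partition $\alpha$ is measurable with respect to the $\mu$-completion of $\alpha^{\Phi_\prec\setminus\{1_\Gamma\}}\vee\AAA$, and hence so is $\alpha^g$ for \emph{every} $g\in\Gamma$ after translating the hypothesis by $g$. The hypothesis gives $\alpha\subset\AAA\vee\alpha^{\Phi_\prec}$ a.s., but $\alpha^{\Phi_\prec}$ here is the $\sigma$-algebra generated by $\{\alpha^h: h\prec 1_\Gamma\}$, which does not include $\alpha$ itself since $1_\Gamma\not\prec1_\Gamma$; so there is nothing circular. Applying the $\Gamma$-action: for $g\in\Gamma$ and a.e.\ $\prec$ we get $\alpha^g\subset\AAA\vee (\alpha^{\Phi_\prec})^g=\AAA\vee\alpha^{g^{-1}\Phi_\prec}$ — using invariance of $\AAA$ and that $\nu$ is $\Gamma$-invariant so the "a.e." statement survives the countably many translates. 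Here I need to translate the \emph{past} correctly: $h\prec 1_\Gamma \Leftrightarrow hg\prec g$, so the relevant translated index set $\{h: hg\prec g\}$ is $\{h: h\prec 1_\Gamma\cdot g^{-1}\text{ in the shifted order}\}$, and the total-order / random-past axioms (ii), (iii) let me order any finite set compatibly with $\prec$.

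The second step is to bootstrap from $\alpha$ to $\alpha^F$ for finite $F$, and then to all of $\Borel(X)\bmod\mu$. Fix a finite $F=\{g_1,\dots,g_n\}$; by the total-order property of $\prec$ (a.e.\ instance), enumerate $F$ so that $g_i\prec g_j$ implies $i<j$. Then for each $i$, the hypothesis translated by $g_i$ shows $\alpha^{g_i}$ lies in $\AAA$ joined with the $\sigma$-algebra generated by $\{\alpha^h : h\prec g_i\}$; restricting the generating set to those $h$ that happen to lie in $\{g_1,\dots,g_{i-1}\}$ is not automatic, so instead I iterate: $\alpha^{g_i}\in\AAA\vee\alpha^{\{h:h\prec g_i\}}$, and each such $\alpha^h$ with $h\prec g_i$ is again, by the hypothesis at $h$, in $\AAA\vee\alpha^{\{h':h'\prec h\}}$ with $h'\prec h\prec g_i$ by transitivity. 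Transfinite/countable iteration of this — formally, take the smallest $\mu$-complete $\sigma$-algebra $\BBB_\prec$ containing $\AAA$ and closed under "$\alpha^g\in\BBB_\prec$ whenever $\alpha^h\in\BBB_\prec$ for all $h\prec g$" — combined with the fact that the hypothesis is a fixed-point-style closure property, shows $\alpha^g\in\AAA\vee\alpha^{\Phi_\prec}$ for \emph{all} $g$ simultaneously, hence $\alpha^\Gamma\subset\AAA\vee\alpha^{\Phi_\prec}$, and therefore by \eqref{eq:alpha_relative_generator_def}, $\Borel(X)=\alpha^\Gamma\vee\AAA\subset\AAA\vee\alpha^{\Phi_\prec}\subset\Borel(X)\bmod\mu$. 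Thus $\Borel(X)=\AAA\vee\alpha^{\Phi_\prec}\bmod\mu$ for a.e.\ $\prec$. In particular every finite partition $\beta$ is measurable w.r.t.\ the completion of $\alpha^{\Phi_\prec}\vee\AAA$; but I actually want measurability w.r.t.\ $\beta^{\Phi_\prec}\vee\AAA$. To get this, note $\alpha^{\Phi_\prec}\subset\beta^{\Phi_\prec}\vee\AAA$ is false in general, so instead I apply the previous conclusion to the refined generator $\beta\vee\alpha$ (which is still a relative generator with finite entropy) if needed, or — cleaner — I simply invoke Corollary~\ref{cor:amenable_predictability_iff_zero_entropy} directly once I know $h_\Gamma(\alpha,X,\mu\vert\AAA)=0$, as follows.

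The cleanest route, which I would adopt to finish, is: by the Kieffer--Pinsker formula with the random past $\tilde\Phi$ induced by (an invariant random \emph{total} order extending) $\prec$ — here Lemma~\ref{lem: good order} lets me replace $\prec$ by a total order $\tilde\prec$ extending it, so that $\Phi_{\tilde\prec}\supseteq\Phi_\prec$ and the hypothesis $\alpha\subset\AAA\vee\alpha^{\Phi_\prec}\subseteq\AAA\vee\alpha^{\Phi_{\tilde\prec}}$ is preserved — we get
\[
h_\Gamma(\alpha,X,\mu\vert\AAA)=\expect_{\tilde\prec}H_\mu(\alpha\mid\alpha^{\Phi_{\tilde\prec}}\vee\AAA)=0,
\]
since each integrand vanishes by hypothesis. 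Then since $\alpha$ is a relative generator, the Kolmogorov--Sinai theorem gives $h_\Gamma(X,\mu\vert\AAA)=h_\Gamma(\alpha^\Gamma\vee\AAA,X,\mu\vert\AAA)=h_\Gamma(\alpha,X,\mu\vert\AAA)=0$, as desired. The main obstacle is the bookkeeping in the generator/bootstrap step — correctly tracking how $\Phi_\prec$ transforms under the $\Gamma$-action and justifying that the a.e.\ hypothesis, translated over countably many group elements, still holds a.e.\ (using $\Gamma$-invariance of $\nu$) — together with the passage from "$\alpha$ generates under $\Gamma$" to "$\alpha$ generates under $\Phi_\prec$ alone," which is exactly where the total-order extension from Lemma~\ref{lem: good order} and the transitivity closure argument are needed.
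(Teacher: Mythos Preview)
Your final paragraph --- the ``cleanest route'' --- is exactly the paper's proof: extend $\prec$ to an invariant random total order $\tilde\prec$ via Lemma~\ref{lem: good order}, observe that $\Phi_\prec\subseteq\Phi_{\tilde\prec}$ so the hypothesis $\alpha\subset\AAA\vee\alpha^{\Phi_{\tilde\prec}}$ persists, apply the Kieffer--Pinsker formula to get $h_\Gamma(\alpha,X,\mu\vert\AAA)=0$, and finish with the Kolmogorov--Sinai theorem using that $\alpha$ is a relative generator. The paper compresses the last two steps into one sentence, but the argument is identical.

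The earlier ``bootstrap'' paragraphs, however, contain a genuine gap that you should discard rather than try to repair. You attempt to show $\alpha^g\in\AAA\vee\alpha^{\Phi_\prec}$ for \emph{all} $g\in\Gamma$ by iterating the hypothesis along the order, ultimately claiming $\alpha^\Gamma\subset\AAA\vee\alpha^{\Phi_\prec}$. But this kind of induction requires a \emph{well}-order, not merely a total order: for $g$ in the ``future'' of $1_\Gamma$, the set $\{h:h\prec g\}$ is typically infinite with no maximal element, and there is no base case from which to start. (Concretely, take $\Gamma=\ZZ$ with the usual order and $\alpha$ a Bernoulli generator shifted: the hypothesis $\alpha\subset\alpha^{\Phi_\prec}$ fails, but even when it holds, no amount of iterating ``$\alpha^n$ is determined by $\{\alpha^m:m<n\}$'' lets you express $\alpha^1$ in terms of $\{\alpha^m:m<0\}$ alone.) Your closure operator $\BBB_\prec$ is well-defined, but the claim that $\alpha^g\in\BBB_\prec$ for all $g$ is exactly what fails without well-foundedness. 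Fortunately none of this is needed: the Kieffer--Pinsker formula plus Kolmogorov--Sinai handles the passage from the single generator to the whole action without any such argument.
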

\begin{proof}
	Denote the law of the invariant random partial order $\prec$ by $\nu \in \Prob_\Gamma(\Ord(\Gamma))$.
	By Lemma \ref{lem: good order}, we can find   an invariant random total order $\tilde \nu \in \Prob_\Gamma(\Ord(\Gamma))$ that extends $\nu$. Because $\tilde \nu$ extends $\nu$ it follows that $\alpha \subset \AAA \vee \alpha^{\Phi_{\prec}}$ for $\tilde \nu$ -a.e instance of $\prec$. This implies by Theorem \ref{th: generalized kieffer} that
	$h_\Gamma( X, \mu \vert  \AAA ) = 0$.
\end{proof}

Seward  \cite{1405.3604} defined the \emph{relative Rokhlin entropy} for measure-preserving actions of countable groups.
For an ergodic measure-preserving action $\act{\Gamma}{(X,\mu)}$ of a countable group $\Gamma$ and $\Gamma$-invariant $\sigma$-subalgebra, it is given by
\begin{equation*}
\hRok_\Gamma(X,\mu \mid \AAA) = \inf_\alpha H_\mu(\alpha \mid \AAA),
\end{equation*}
where $\alpha$ ranges over all generating partitions (countable partitions $\alpha$ that satisfy \eqref{eq:alpha_relative_generator_def}).
For a free action of an amenable group, Rokhlin entropy coincides with Kolmogorov-Sinai  entropy.
 Seward proved the following far-reaching extension of Sinai's theorem: Any free ergodic measure-preserving action $\act{\Gamma}{(X,\mu)}$ with positive Rokhlin entropy admits a Bernoulli factor, or equivalently it admits a (non-trivial) partition whose iterates under $\Gamma$ are jointly independent \cite{1804.05269}. 
 As an immediate  corollary of Seward's  Bernoulli factor theorem we have:

\begin{prop}\label{prop:predictabe_zero_roklin_entropy}
	Let $\Gamma$ be a countable group, let  $\nu$ be an invariant random partial order on $\Gamma$. A free ergodic  action  $\act{\Gamma}{(X,\mu)}$ that is measure-theoretically predictable with respect to some  invariant random partial order has zero Rokhlin entropy.
\end{prop}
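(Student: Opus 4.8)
The plan is to combine the measure-theoretic predictability hypothesis with Seward's Bernoulli factor theorem, using the characterization of zero relative Rokhlin entropy via the non-existence of Bernoulli factors. Observe first that if $\act{\Gamma}{(X,\mu)}$ is measure-theoretically predictable with respect to an invariant random partial order $\nu$, then by Lemma \ref{lem: good order} — wait, that lemma requires amenability, so I cannot use it here. Instead I would argue directly with $\nu$ itself: by definition, for every finite Shannon entropy partition $\alpha$ we have that $\alpha$ is measurable with respect to the $\mu$-completion of $\alpha^{\Phi_\prec}$ for $\nu$-a.e.\ $\prec$ (taking the trivial factor $\AAA$).

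The key step is to show that such an action admits no non-trivial partition $\beta$ whose $\Gamma$-translates $\{\beta^g : g \in \Gamma\}$ are jointly independent. Suppose toward a contradiction that $\beta$ is such a partition. For $\nu$-a.e.\ $\prec$, predictability gives that $\beta$ is measurable with respect to the $\mu$-completion of $\beta^{\Phi_\prec}$, where $\Phi_\prec = \{\gamma : 1_\Gamma \prec \gamma\}$. But $1_\Gamma \notin \Phi_\prec$ since $\prec$ is a partial order (antisymmetry forces $1_\Gamma \not\prec 1_\Gamma$), so $\Phi_\prec$ is a subset of $\Gamma \setminus \{1_\Gamma\}$. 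Joint independence of the translates of $\beta$ means in particular that $\beta = \beta^{1_\Gamma}$ is independent of the $\sigma$-algebra generated by $\{\beta^g : g \neq 1_\Gamma\}$, hence independent of $\beta^{\Phi_\prec}$. A non-trivial partition cannot be simultaneously independent of and measurable with respect to the same $\sigma$-algebra (independence would force $H_\mu(\beta \mid \beta^{\Phi_\prec}) = H_\mu(\beta) > 0$ while measurability forces $H_\mu(\beta \mid \beta^{\Phi_\prec}) = 0$). This contradiction shows no Bernoulli factor exists.

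Finally I would invoke Seward's Bernoulli factor theorem, as quoted in the excerpt just before the proposition: any free ergodic measure-preserving action of a countable group with positive Rokhlin entropy admits a non-trivial partition whose $\Gamma$-iterates are jointly independent. Since we have shown no such partition exists for our action, its Rokhlin entropy must be zero. (The same reasoning applies relative to a $\Gamma$-invariant $\sigma$-subalgebra $\AAA$ if one wants the relative statement, replacing $\beta^{\Phi_\prec}$ by $\beta^{\Phi_\prec} \vee \AAA$ and using the relative version of Seward's theorem, but the proposition as stated is the absolute case.)

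The main obstacle, I expect, is making the independence-versus-measurability contradiction fully rigorous in the random setting: one must be careful that the $\nu$-a.e.\ statement from predictability is applied to a fixed partition $\beta$ (so that one genuinely has a single $\prec$ witnessing both joint independence of $\{\beta^g\}$ and measurability of $\beta$ over $\beta^{\Phi_\prec}$), and one should check that $\Phi_\prec$ being a proper subset of $\Gamma \setminus \{1_\Gamma\}$ is enough — it is, because joint independence of all translates implies independence of $\beta$ from any sub-collection not containing $\beta^{1_\Gamma}$ itself. Everything else is a direct citation of Seward's theorem and elementary properties of conditional Shannon entropy.
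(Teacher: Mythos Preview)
Your argument is correct and is essentially the same as the paper's: both proceed by contraposition via Seward's Bernoulli factor theorem, observing that a partition with jointly independent $\Gamma$-iterates cannot be measurable with respect to $\alpha^{\Phi_\prec}$ for any $\prec$, which contradicts predictability. Your write-up simply supplies a few more details (the explicit $H_\mu(\beta\mid\beta^{\Phi_\prec})$ computation) than the paper's terse version.
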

\begin{proof}
   If a free ergodic action $\act{\Gamma}{(X,\mu)}$ has positive Rokhlin entropy then Seward's Bernoulli factor theorem says that it  admits a non-trivial finite partition $\alpha$ with independent $\Gamma$-iterates. Such a partition $\alpha$ is not measurable with respect to the $\mu$-completion of $\alpha^{\Phi_\prec}$, for any order $\prec$ on $\Gamma$. Hence, a free ergodic action  with positive Rokhlin entropy is not measure-theoretically predictable with respect to any invariant random order.
\end{proof}
\begin{question}
	Is there a direct proof of Proposition \ref{prop:predictabe_zero_roklin_entropy} that does not use  Seward's  Bernoulli factor theorem?
\end{question}

\section{From topological predictability to measure-theoretical predictability via  $\mu$-continuous partitions}\label{sec: continuous partitions}
In this section we complete the proof of Theorem \ref{thm:predicatable_implies_zero_entropy_amenable}.
The steps are essentially identical to  Hochman's proof in  \cite{MR2873161}, where only the lexicographic past on the group  $\ZD$ was considered, without the ``relative'' version (the image of the factor map $\pi: X \to Y$ was the trivial one-point space).
Just as in Hochman's proof, we will rely on the variational principle for topological entropy.  
\begin{thm}[The Variational principle \cite{stepin-tagi-zade,MR533007,MR665415}]\label{thm:variational_principle}
	Let $\act{\Gamma}{X}$ be an action of a countable amenable group by homeomorphisms on a compact metrizable space $X$. 
	Then the topological entropy $h_\Gamma(X)$ is given by
	\begin{equation*}
	h_\Gamma(X) = \sup_{\mu \in \Prob_\Gamma(X)} h_\Gamma(X,\mu).
	\end{equation*}
\end{thm}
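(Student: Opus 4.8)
I would prove the two inequalities separately, following Misiurewicz's classical scheme adapted to amenable groups (this is essentially what the cited works do); fix a F\o lner sequence $(F_i)_{i=1}^\infty$ for $\Gamma$ throughout. For the bound $\sup_{\mu\in\Prob_\Gamma(X)}h_\Gamma(X,\mu)\le h_\Gamma(X)$ it suffices to show $h_\Gamma(\alpha,X,\mu)\le h_\Gamma(X)$ for every $\mu\in\Prob_\Gamma(X)$ and every finite Borel partition $\alpha=\{A_1,\dots,A_k\}$. Given $\varepsilon>0$, regularity of $\mu$ provides compact sets $B_j\subseteq A_j$ with $\sum_j\mu(A_j\setminus B_j)$ as small as we like; set $B_0=X\setminus\bigcup_{j\ge1}B_j$ and $\beta=\{B_0,B_1,\dots,B_k\}$, so that $H_\mu(\alpha\mid\beta)<\varepsilon$ and it is enough to bound $h_\Gamma(\beta,X,\mu)$. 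As $B_1,\dots,B_k$ are compact and pairwise disjoint, choose pairwise disjoint open sets $U_j\supseteq B_j$ and put $U_0=B_0$ (which is open); then $\mathcal{U}=\{U_0,\dots,U_k\}$ is an open cover of $X$ in which every point lies in at most two members, while $B_j\cap U_l=\emptyset$ whenever $j\ge1$ and $l\ne j$. Each nonempty atom of $\beta^{F_i}$ is given by a function $\sigma\colon F_i\to\{0,\dots,k\}$, and if such an atom meets the member $\bigcap_{g\in F_i}g^{-1}U_{\tau(g)}$ of a minimal subcover of $\bigvee_{g\in F_i}g^{-1}\mathcal{U}$, then these disjointness properties force at most two possible values of $\sigma(g)$ for each $g$ once $\tau(g)$ is known; hence the number of nonempty atoms of $\beta^{F_i}$ is at most $2^{|F_i|}$ times the covering number of $\bigvee_{g\in F_i}g^{-1}\mathcal{U}$, so $H_\mu(\beta^{F_i})\le|F_i|\log 2+\log N\bigl(\bigvee_{g\in F_i}g^{-1}\mathcal{U}\bigr)$. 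Dividing by $|F_i|$ and letting $i\to\infty$ yields $h_\Gamma(\beta,X,\mu)\le\log 2+h_\Gamma(X)$; to remove the spurious $\log 2$ I apply the inequality $h_\Gamma(\alpha,X,\mu)\le h_\Gamma(X)+\log 2$ to the diagonal action $\act{\Gamma}{X^n}$ with $\mu^{\otimes n}$ and the product partition $\alpha^{\otimes n}$, using $h_\Gamma(\alpha^{\otimes n},X^n,\mu^{\otimes n})=n\,h_\Gamma(\alpha,X,\mu)$ and $h_\Gamma(X^n)=n\,h_\Gamma(X)$ to get $h_\Gamma(\alpha,X,\mu)\le h_\Gamma(X)+\tfrac1n\log 2$ for all $n$, whence the claim.

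For the reverse bound $h_\Gamma(X)\le\sup_{\mu\in\Prob_\Gamma(X)}h_\Gamma(X,\mu)$, fix a compatible metric on $X$ and $\varepsilon>0$, and for each $i$ take a maximal $(F_i,\varepsilon)$-separated set $E_i\subseteq X$; then $\sup_{\varepsilon>0}\limsup_i\frac{1}{|F_i|}\log|E_i|=h_\Gamma(X)$. Put $\sigma_i=|E_i|^{-1}\sum_{x\in E_i}\delta_x$, form the F\o lner averages $\mu_i=|F_i|^{-1}\sum_{g\in F_i}g_*\sigma_i$, and let $\mu$ be a weak-$*$ limit point of $(\mu_i)$ along a subsequence on which $\frac{1}{|F_i|}\log|E_i|$ converges; the F\o lner property forces $\mu\in\Prob_\Gamma(X)$. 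Choose a finite partition $\alpha=\{A_1,\dots,A_k\}$ with $\mathrm{diam}(A_j)<\varepsilon$ and $\mu(\partial A_j)=0$ for all $j$; then $\alpha$ separates $E_i$ along $F_i$, so $H_{\sigma_i}(\alpha^{F_i})=\log|E_i|$. The crux is a combinatorial estimate --- the analogue for amenable $\Gamma$ of Misiurewicz's block-averaging lemma --- which, by quasitiling $F_i$ up to a subset of density $o(1)$ with almost disjoint translates of a fixed F\o lner set $F$, applying subadditivity of Shannon entropy over the tiles, and using concavity of entropy in the measure together with the definition of $\mu_i$, shows $\limsup_i\frac{1}{|F_i|}H_{\sigma_i}(\alpha^{F_i})\le\frac{1}{|F|}H_\mu(\alpha^F)$ for every such $F$ (the passage from $\mu_i$ to $\mu$ here using $\mu(\partial A_j)=0$). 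Since $h_\Gamma(\alpha,X,\mu)=\inf_F\frac{1}{|F|}H_\mu(\alpha^F)$ by the Ornstein--Weiss lemma on subadditive set functions, this gives $h_\Gamma(\alpha,X,\mu)\ge\limsup_i\frac{1}{|F_i|}\log|E_i|$; letting $\varepsilon\to0$ completes the proof.

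The hard part is the combinatorial estimate of the second step. For $\Gamma=\ZZ$ it is the elementary block-averaging argument that partitions $\{0,\dots,n-1\}$ into length-$q$ intervals with varying offsets, rewrites the entropy of $\alpha$ over each block (with respect to $\sigma_n$) as the entropy of $\alpha^{[0,q)}$ with respect to a translate of $\sigma_n$, and averages over the offsets to recover $\mu_n$; for a general amenable group the rigid interval structure of $\ZZ$ is unavailable, so one must substitute the Ornstein--Weiss quasitiling theorem for $F_i$ and then control the resulting overlap and boundary error terms uniformly along the F\o lner sequence. The invariance of the limit measure $\mu$, by contrast, is an immediate consequence of the F\o lner property.
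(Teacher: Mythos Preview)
The paper does not give its own proof of this theorem: it is quoted as a classical result with references to Stepin--Tagi-Zade, Moulin Ollagnier--Pinchon and Moulin Ollagnier, and is invoked only as a tool in the proof of Theorem~\ref{thm:predicatable_implies_zero_entropy_amenable}. So there is nothing in the paper to compare your argument against.

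Your sketch is essentially the Misiurewicz proof transplanted from $\ZZ$ to a general amenable group, which is indeed the content of the cited references. The first half (metric $\le$ topological) is fine; the product-space trick $h_\Gamma(\alpha^{\otimes n},X^n,\mu^{\otimes n})=n\,h_\Gamma(\alpha,X,\mu)$, $h_\Gamma(X^n)=n\,h_\Gamma(X)$ is a correct substitute for the unavailable ``pass to $T^n$'' move. For the second half, the scheme you outline --- empirical measures on maximal $(F_i,\varepsilon)$-separated sets, F\o lner averages, weak-$*$ limit, a partition of small diameter with $\mu$-null boundaries, and the Ornstein--Weiss quasitiling lemma to carry out the block-averaging --- is exactly the standard argument. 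Two small points worth tightening if you write this out in full: the identity $h_\Gamma(\alpha,X,\mu)=\inf_F\frac{1}{|F|}H_\mu(\alpha^F)$ you invoke needs the infimum to be taken over F\o lner sets (or one should appeal to strong subadditivity of Shannon entropy), and in the quasitiling step you should be explicit that the tiles are translates $Fg_j$ with the proportion of $F_i$ covered tending to $1$ and the overlap multiplicity bounded, so that the subadditivity estimate $H_{\sigma_i}(\alpha^{F_i})\le\sum_j H_{\sigma_i}(\alpha^{Fg_j})+o(|F_i|)\log|\alpha|$ goes through cleanly.
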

Note that Kerr and Li proved a more general variational principal for sofic entropy \cite{MR2854085}.

As in the previous sections,  let $X$ be a compact metric space and let $\mu$ be a Borel probability measure on it.
The Rokhlin distance $d_\mu(\alpha,\beta)$ between two partitions $\alpha$ and $\beta$ of finite Shannon entropy is defined by the formula 
\[
d_\mu(\alpha, \beta) = H_\mu(\alpha \vert \beta) + H_\mu(\beta \vert \alpha). 
\]  
It is well known that for a measure preserving action of an amenable group the Kolmogorov-Sinai entropy is an $1$-Lipschitz function with respect to the Rokhlin metric on the space of partition with finite Shannon entropy.

 A partition $\alpha$ of $(X,\mu)$ is said to be a {\em $\mu$-continuous partition} if there is a continuous function $f: X \to \RR$ such that pieces of $\alpha$ are equal to the level sets of $f$ up to $\mu$-null subset. This definition is due to Hochman \cite{MR2873161}; he proved the following (in the more general setup where $X$ is a normal topological space and  $\mu$ is a regular Borel  probability measure): 
\begin{prop}[Hochman \cite{MR2873161}]
	For any Borel probability measure $\mu \in \Prob(X)$, the  $\mu$-continuous partitions are dense with respect to the Rokhlin metric.
\end{prop}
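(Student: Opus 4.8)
The goal is to show that, given a partition $\alpha = \{A_1,\dots,A_n\}$ of finite Shannon entropy and $\varepsilon > 0$, there is a $\mu$-continuous partition $\beta$ with $d_\mu(\alpha,\beta) < \varepsilon$. I would first reduce to the case of a \emph{finite} partition: since $H_\mu(\alpha) < \infty$, truncating $\alpha$ to its first $n$ atoms and lumping the tail into a single piece changes $\alpha$ by an arbitrarily small amount in the Rokhlin metric, so it suffices to approximate finite partitions. Next I would pass to $\mu$-continuous \emph{simple functions}: the Rokhlin distance $d_\mu(\alpha,\beta)$ is controlled by $\mu$ of the symmetric differences of corresponding atoms, so it is enough to find, for each atom $A_i$, a set $B_i$ whose indicator is ``$\mu$-almost'' a continuous function (i.e.\ its boundary, in a suitable sense, is $\mu$-null) and with $\mu(A_i \triangle B_i)$ small, while keeping $\{B_i\}$ a genuine partition.

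The core analytic step is regularity of the Borel measure $\mu$ on the compact (hence normal) space $X$. By inner/outer regularity, for each atom $A_i$ I can find a closed set $C_i \subset A_i$ and an open set $U_i \supset A_i$ with $\mu(U_i \setminus C_i)$ as small as I like; shrinking the $U_i$ so they only overlap on a set of small measure (or disjointifying at the end), Urysohn's lemma produces continuous functions $f_i : X \to [0,1]$ that are $1$ on $C_i$ and $0$ off $U_i$. The partition I actually build is the one whose pieces are the level sets of the single continuous function $f = \sum_i 2^{-i} g_i$ (or $f = \sum_i i \cdot f_i$ after making the $f_i$ have essentially disjoint supports), where the point is to encode which ``box'' a point falls into. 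A cleaner route: pick a single continuous $f$ that separates the pieces — e.g.\ arrange the $C_i$ to be pairwise disjoint closed sets and use Urysohn/Tietze to get a continuous $f$ taking the constant value $i$ on $C_i$; then the level set $f^{-1}(i)$ contains $C_i$ and the symmetric difference $f^{-1}(i) \triangle A_i$ lives inside $\bigcup_j (U_j \setminus C_j)$, which has small measure. Summing the entropy estimates $H_\mu(\alpha \mid \beta) + H_\mu(\beta \mid \alpha) \le \phi(\sum_i \mu(A_i \triangle B_i))$ for the standard concave modulus $\phi$ finishes the bound.

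The main obstacle I anticipate is bookkeeping the requirement that the approximating sets form an honest partition \emph{and} simultaneously have $\mu$-null topological boundaries (so that their indicators agree $\mu$-a.e.\ with a continuous function — recall a level set of a continuous function need not be open or closed, but $\mu$-continuity only asks for agreement up to a null set, which is exactly what regularity buys). I would handle this by first producing the closed sets $C_i$ and open supersets $U_i$ with $\mu(\bigcup_i U_i \setminus C_i)$ small, then defining $B_i = C_i$ for $i < n$ and $B_n = X \setminus \bigcup_{i<n} C_i$ to force a partition; one then checks that each $B_i$ differs from $A_i$ by a subset of $\bigcup_j(U_j\setminus C_j)$, and that the common refinement is the level-set partition of a suitable continuous $f$ built from Urysohn functions separating the disjoint closed sets $C_1,\dots,C_n$. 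Everything else is the routine continuity estimate for Shannon entropy under small perturbations of partitions, together with the reduction to finite partitions already noted; since the paper attributes this to Hochman and only needs it as a black box, I would state it, indicate the regularity-plus-Urysohn mechanism, and refer to \cite{MR2873161} for the details.
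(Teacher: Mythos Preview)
The paper does not actually prove this proposition: it is stated with attribution to Hochman \cite{MR2873161} and used as a black box, exactly as you anticipate in your final sentence. So in that sense your proposal matches the paper perfectly --- there is nothing to compare.

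That said, a brief comment on your sketch, since you went to the trouble of writing one. The regularity-plus-Urysohn mechanism is indeed the heart of Hochman's argument, but your construction has a genuine wrinkle you gloss over: if you take continuous $f$ with $f\equiv i$ on $C_i$ and interpolating elsewhere, then the partition into level sets of $f$ is \emph{not} $\{f^{-1}(1),\dots,f^{-1}(n)\}$ --- there is an uncountable family of level sets $f^{-1}(t)$ for non-integer $t$ sitting inside the bad region $X\setminus\bigcup_i C_i$, and that region has small but typically \emph{positive} $\mu$-measure. So the finite collection $\{f^{-1}(i)\}_i$ is not a partition, and lumping the residue into one piece gives a set that is not a level set of $f$. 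One standard fix (and this is closer to what Hochman does) is to build the continuous function so that its range, $\mu$-almost everywhere, is finite: e.g.\ approximate $1_{A_i}$ in $L^1(\mu)$ by a continuous $g_i$, pick thresholds $t_i$ so that $\mu(g_i^{-1}(t_i))=0$ (possible for all but countably many $t_i$), and let the pieces be $\{g_i>t_i\}\setminus\bigcup_{j<i}\{g_j>t_j\}$; one then checks this partition arises (mod $\mu$) from the level sets of a single continuous function built from the $g_i$. Your final recommendation to cite \cite{MR2873161} for the details is exactly what the paper does, and is the appropriate move here.
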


\begin{proof}[Proof of Theorem \ref{thm:predicatable_implies_zero_entropy_amenable}.]
	Let $\act{\Gamma}{X}$,$\act{\Gamma}{Y}$ and $\pi: X \to Y$ be as in the statement of Theorem \ref{thm:predicatable_implies_zero_entropy_amenable}.
Let $$\AAA = \pi^{-1}(\Borel(Y)) \subset \Borel(X).$$
 Let $\mu$ be a $\Gamma$-invariant measure on $X$.
 We note that $h_\Gamma(Y,\pi(\mu)) = h_\Gamma(\AAA,X,\mu)$.
	Take any $\mu$-continuous partition $\alpha$  of $(X,\mu)$ with $H(\alpha) < \infty$. Then topological predictability of the action relative to $\pi:X \to Y$  implies that  $\alpha$ is measurable with respect to the completion of $\alpha^{\Phi_{\prec}} \vee \AAA$ for almost every  instance of the invariant random total order $\prec$. By Corollary \ref{cor:amenable_predictability_iff_zero_entropy} of Theorem \ref{th: generalized kieffer} 
	it follows that
	$$h_\Gamma(\alpha,X,\mu \mid \AAA) =0.$$
	Since this holds for a set of partitions that is dense with respect to the Rokhlin metric,
	and the function $\alpha \mapsto h_\Gamma(\alpha,X,\mu \mid \AAA)$ is continuous (in fact $1$-Lipschitz with respect to the Rokhlin-metric), it follows that
	\begin{equation*}
	h_\Gamma(X,\mu \mid \AAA) = 0.
	\end{equation*}
	
	The  Abramov-Rokhlin entropy addition formula for amenable group actions \cite{ward_amenable_abramov_formula} asserts that  
	\[
	h_\Gamma( X, \mu ) = h_\Gamma( \AAA,X, \mu) + h_\Gamma(X,\mu \mid \AAA).
	\]
	Thus for any $\mu \in\Prob_\Gamma(X)$:
	$$
	h_\Gamma(X,\mu)= h_\Gamma(\AAA,X,\mu)= h_\Gamma(Y, \pi(\mu)). 
	$$
	By the variational principle (Theorem \ref{thm:variational_principle}) it follows that 
	\[
	h_\Gamma(X) \leq h_\Gamma(Y).
	\]
	Since the topological entropy for actions of amenable groups is factor-monotone, we have
	\[
	h_\Gamma(X) = h_\Gamma(Y).
	\]
\end{proof}

\section{Prime actions have zero topological entropy}\label{sec:prime_actions}

An action $\act{\Gamma}{X}$ is called \emph{topologically prime} if every factor map is either an isomorphism or it maps onto the trivial action on the one-point space.
More generally, if $\pi:X \to Y$ is a topological factor map between  $\act{\Gamma}{X}$ and  $\act{\Gamma}{Y}$, we say that   $\pi:X \to Y$ is a \emph{topologically prime extension} if $\act{\Gamma}{X}$ has no intermediate factors.
Equivalently, $C(X)$ has no strict $\Gamma$-invariant  $C^*$-subalgebras that strictly contain $\pi_*(C(Y))$.
King constructed and example of a homeomorphism on the Cantor set that is  topologically prime \cite{MR1091424}, because it has ``topological minimal self-joinings'' in the sense of del Junco \cite{MR896794}. The later property, called ``doubly minimal'' by Weiss  \cite{MR1603185},  means that any pair of points $x,y \in X$ that are not in the same orbit have a dense orbit in $X \times X$.  
More generally, any free and ergodic measure-preserving $\ZZ$-action with zero entropy  admits a uniquely ergodic  doubly minimal  model  \cite{MR1603185}. In particular there  is topologically prime  model for any free, ergodic $\ZZ$-action with zero entropy \cite[Theorem $13.1$]{MR2186250}.


The following result was communicated to us by Benjy Weiss. With his kind permission we reproduce his proof. 
\begin{thm}\label{thm:prime_extensions_have_zero_entropy}
	Suppose that $\Gamma$ is a countable amenable group that acts on $X$ and $Y$, and that $\pi:X \to Y$ is a topologically prime extension.
	Then $h_\Gamma(X)=h_\Gamma(Y)$.
\end{thm}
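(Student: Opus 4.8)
The plan is to mimic the strategy used for Theorem~\ref{thm:predicatable_implies_zero_entropy_amenable}: reduce the topological statement to a measure-theoretic one via the variational principle, and then exploit primeness of the extension at the level of $\sigma$-algebras. First I would set $\AAA = \pi^{-1}(\Borel(Y)) \subset \Borel(X)$ and fix an arbitrary $\mu \in \Prob_\Gamma(X)$; by the Abramov--Rokhlin addition formula for amenable group actions it suffices to show $h_\Gamma(X,\mu \mid \AAA) = 0$, since then $h_\Gamma(X,\mu) = h_\Gamma(\AAA,X,\mu) = h_\Gamma(Y,\pi(\mu))$ and the variational principle together with factor-monotonicity closes the argument exactly as before.

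To prove $h_\Gamma(X,\mu \mid \AAA) = 0$, I would fix an invariant random total order $\prec$ on $\Gamma$ (which exists by the construction in Section~\ref{sec: random orderings}) and appeal to Corollary~\ref{cor:amenable_predictability_iff_zero_entropy}: it is enough to show that the action is measure-theoretically $\nu$-predictable relative to $\pi$, i.e.\ that for every finite-entropy partition $\alpha$ and $\nu$-a.e.\ $\prec$ the partition $\alpha$ is measurable with respect to the $\mu$-completion of $\alpha^{\Phi_\prec} \vee \AAA$. The key point is that the ``prediction algebra'' $\mathcal{C}_\prec := \overline{\bigcup_{g \prec 1_\Gamma} \alpha^{\{g\}}} \vee \AAA$ — or rather, after integrating over $\prec$, the associated invariant structure — should, by primeness, be forced to be either all of $\Borel(X) \bmod \mu$ or something tame. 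The cleaner route is Weiss's: instead of partitions, work with a single $\Gamma$-invariant $C^*$-subalgebra $B \subset C(X)$ containing $\pi_*(C(Y))$ and generated (together with $\pi_*(C(Y))$) by $\{f \circ g :\ g \prec 1_\Gamma\}$ for a fixed continuous $f$; this $B$ corresponds to an intermediate factor $X \to Z \to Y$. By topological primeness of the extension, $Z = X$ or $Z = Y$. If $Z = Y$ then $f$ is already in $\pi_*(C(Y))$, contributing nothing. If $Z = X$ for every $f$, then the action is topologically predictable relative to $\pi$ with respect to $\prec$, and we are done by Theorem~\ref{thm:predicatable_implies_zero_entropy_amenable}.

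The subtlety — and the step I expect to be the main obstacle — is that primeness is not symmetric in the two ``halves'' of a total order, whereas predictability requires the past $\{g \prec 1_\Gamma\}$ specifically. One must rule out the possibility that the $C^*$-algebra generated by the strict past of $f$ is an intermediate factor strictly between $Y$ and $X$; primeness forbids strict intermediate factors, so this cannot happen, but one has to check that the $C^*$-algebra generated by $\pi_*(C(Y))$ together with $\{f\circ g:\ g\prec 1_\Gamma\}$ is genuinely $\Gamma$-invariant as a subalgebra. It is clearly invariant under $g^{-1}$ for $g \prec 1_\Gamma$, but one needs invariance under all of $\Gamma$; here invariance of the random order $\prec$ under the $\Gamma$-action, together with a density/limiting argument over $\nu$-a.e.\ order, should be used to patch the asymmetry — or, more simply, one notes that for a total order the collection $\{g :\ g\prec 1_\Gamma\}$ together with its inverse and $1_\Gamma$ exhausts $\Gamma$, and the $C^*$-algebra generated by \emph{all} translates of $f$ equals all of $C(X)$ unless $f$ factors through $Y$, forcing a dichotomy at the level of orders via an averaging argument. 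Once this invariance is secured, primeness supplies the needed dichotomy and the proof concludes as in Section~\ref{sec: continuous partitions}: pass to $\mu$-continuous partitions, invoke Corollary~\ref{cor:amenable_predictability_iff_zero_entropy}, use $1$-Lipschitz continuity of $\alpha \mapsto h_\Gamma(\alpha,X,\mu\mid\AAA)$ in the Rokhlin metric, then Abramov--Rokhlin and the variational principle.
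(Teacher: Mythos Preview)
Your proposal has a genuine gap, and the paper's argument takes a different route that avoids it entirely.

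The problem is exactly the one you flag but do not resolve: for a fixed $f \in C(X)$ and a realization $\prec$ of the random order, the closed algebra generated by $\pi_*(C(Y))$ together with $\{f\circ g:\ g \prec 1_\Gamma\}$ is \emph{not} $\Gamma$-invariant (a random total order on $\Gamma$ has no reason to have a past that is a subsemigroup, and even for a deterministic invariant order you only get invariance under one ``half'' of $\Gamma$). Hence topological primeness, which concerns $\Gamma$-invariant intermediate $C^*$-subalgebras, says nothing about it, and no averaging over $\nu$ will repair this: averaging a family of non-invariant subalgebras does not produce an invariant one to which the dichotomy can be applied. Your suggested patch in the last paragraph does not work either, since the algebra generated by \emph{all} translates of $f$ is the one to which primeness applies, not the past algebra, and the conclusion that this full algebra equals $C(X)$ does not by itself yield predictability with respect to the past.

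Weiss's actual argument bypasses random orders and predictability altogether. Fix $\mu \in \Prob_\Gamma(X)$ and set $\AAA = \pi^{-1}(\Borel(Y))$. Assuming there exist arbitrarily small Borel sets not in the $\mu$-completion of $\AAA$, one can choose for each $\epsilon>0$ a $\mu$-continuous partition $\alpha$ with $0 < H_\mu(\alpha \mid \AAA) < \epsilon$. Because $\alpha$ comes from some $f\in C(X)\setminus \pi_*(C(Y))$, primeness applied to the \emph{full} $\Gamma$-invariant $C^*$-algebra generated by $f$ and $\pi_*(C(Y))$ forces $(\alpha \vee \AAA)^\Gamma = \Borel(X)\bmod\mu$; that is, $\alpha$ is a relative generator. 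Then the Kolmogorov--Sinai theorem gives $h_\Gamma(X,\mu\mid\AAA) = h_\Gamma(\alpha,X,\mu\mid\AAA) \le H_\mu(\alpha\mid\AAA) < \epsilon$, and Abramov--Rokhlin plus the variational principle finish as you outlined. When the ``small non-$\AAA$-measurable sets'' assumption fails, a separate short argument shows $\pi$ is finite-to-one $\mu$-a.e.\ and hence entropy-preserving; this degenerate case also needs to be handled and is absent from your sketch.
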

For $\Gamma=\ZZ$ and $\pi$ equal to the trivial factor, a proof of Theorem \ref{thm:prime_extensions_have_zero_entropy} appears in \cite{MR1125888}. See also \cite[Section $13$]{MR2186250}.

\begin{proof}
Let $\pi: X \to Y$ be a topologically prime extension. Choose any 
$\Gamma$-invariant measure $\mu \in  \Prob_\Gamma(X)$, and let $\nu \in \Prob_\gamma(Y)$ be the push-forward of $\mu$ via $\pi$. We will prove that 
\begin{equation}\label{eq: measure_entropy_equality_ergodic}
h_\Gamma(X,\mu)=h_\Gamma(Y,\nu).
\end{equation}
We first prove this under the additional assumption that $\mu$ satisfies the following property: 
\begin{equation}\label{eq:mu_cont_not_isomorphism}
\inf \left\{ \mu(A) :~ A \mbox { is not contained in the  } \mu \mbox{-completion of } \pi^{-1}(\Borel(Y))\right\} =0.
\end{equation}

Since $\mu$-continuous partitions are dense with respect to the Rokhlin metric, we can find for any $\epsilon >0$ a $\mu$-continuous partition $\alpha$  such that 
\begin{equation}\label{eq:H_mu_alpha_mid_Y}
0 <H_\mu(\alpha \mid \pi^{-1}\Borel(Y)) < \epsilon.
\end{equation}

It follows that
\begin{equation*} 
h_\mu(\alpha, X , \mu \mid \pi^{-1}\Borel(Y))  < \epsilon.
\end{equation*}  
Since $\pi:X \to Y$ is  a topologically prime extension, for any $f \in C(X) \setminus \pi_*(C(Y))$,
 $C(X)$ is contained in the $\Gamma$-invariant $C^*$-algebra generated by $f$ and $\pi_*(C(Y))$.
By the left inequality in \eqref{eq:H_mu_alpha_mid_Y}, $\alpha$  is not contained in the $\mu$-completion of $\pi^{-1}(\Borel(Y))$. It follows that 
\begin{equation*} 
(\alpha \vee \pi^{-1}\Borel(Y))^\Gamma = \Borel(X) \mod \mu.
\end{equation*}
%
By Kolmogorov-Sinai theorem,
$$h_\Gamma(X,\mu \mid \pi^{-1}\Borel(Y))  < \epsilon.$$
Since $\epsilon >0$ was arbitrary, this shows
that
$$h_\Gamma(X,\mu \mid \pi^{-1}\Borel(Y))  =0.$$
By the Abramov-Rokhlin entropy addition formula, this implies \eqref{eq: measure_entropy_equality_ergodic}.

It remains to prove that \eqref{eq: measure_entropy_equality_ergodic} holds even when $\mu \in \Prob_\Gamma(X)$ does not satisfy \eqref{eq:mu_cont_not_isomorphism}. 
	If \eqref{eq:mu_cont_not_isomorphism} does not hold, then there exists $\epsilon >0$ so that every $A \in \Borel(X)$ with $\mu(A) <\epsilon$ is also measurable with respect to the  $\mu$-completion of $\pi^{-1}\Borel(Y)$. In this case the factor map $\pi:X \to Y$  is of the following very degenerate form:
Let $\mu =p\cdot \mu_c + (1-p)\cdot \mu_a$ be the representation of  $\mu$ as a convex combination of a  purely continuous measure $\mu_c \in \Prob_\Gamma(X)$  and purely atomic measure $\mu_a \in \Prob_\Gamma(X)$. Then $\pi:X \to Y$ gives a measure-preserving isomorphism between $\act{\Gamma}{(X,\mu_c)}$ and $\act{\Gamma}{\left(Y,\pi(\mu_c)\right)}$.
In particular,  the factor map $\pi:X \to Y$  is finite-to-one (actually bounded-to-one) $\mu$-almost everywhere. Because $\Gamma$ is an infinite amenable group,  finite-to-one extensions do not increase entropy and  \eqref{eq: measure_entropy_equality_ergodic} follows in this case.

\end{proof}

\begin{remark}
	In the proof above we use the fact that $\Gamma$ is infinite to conclude that    finite-to-one extensions do not increase entropy.
	If $\Gamma$ is a finite group, Theorem \ref{thm:prime_extensions_have_zero_entropy} fails. To see this, take $\act{\Gamma}{X}$ to be the action of a finite group $\Gamma$ on itself by translations, and take $\act{\Gamma}{Y}$ to be an action of $\Gamma$ on the cosets of a maximal proper subgroup.
\end{remark}

We  now consider topologically prime actions of sofic groups. For a measure preserving action $\act{\Gamma}{(X,\mu)}$ of a  sofic group $\Gamma$ with a sofic approximation $\Sigma$, let $h_\Gamma^\Sigma(X,\mu)$ denote the sofic entropy of the action (for definitions see for instance \cite{1711.02062}).

The first-named author and Brandon Seward  \cite[Proposition $8.8$]{1705.09707} and independently Ben Hayes \cite[Proposition  $2.7$ (i)]{hayes2016relative}  proved  the following Abramov-Rokhlin-type inequality:

\begin{prop}[Abramov-Rokhlin sub-addition formula for sofic entropy]\label{prop:abramov_rokhlin_sodic}
	Let $\Gamma$ be a sofic group with sofic approximation $\Sigma$, and let $\pi:X \to Y$ be a factor map between the measure preserving actions  $\act{\Gamma}{(X,\mu)}$ and $\act{\Gamma}{(Y,\mu)}$. If $\alpha$ is a measurable partition of $X$ such that
	\begin{equation}\label{eq:alpha_relative_generator}
	(\alpha \vee \pi^{-1}(\Borel(Y)))^\Gamma \supset \Borel(X) \mod \mu,
	\end{equation}
	then
	$$h_\Gamma^\Sigma(X,\mu) \le h_\Gamma^\Sigma(Y,\nu) + H_\mu(\alpha \mid \pi^{-1}(\Borel(Y)).$$
\end{prop}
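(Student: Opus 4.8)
The plan is to estimate, for a suitable finite observable on $X$, the number of good finite models (microstates) in terms of good models on $Y$, following the microstate description of sofic entropy due to Bowen and Kerr--Li (see \cite{1711.02062}). We may assume $h_\Gamma^\Sigma(Y,\nu)<\infty$, since otherwise there is nothing to prove. First I would reduce to finite partitions: write $\alpha=\bigvee_n\alpha_n$ as an increasing limit of finite partitions, fix an increasing sequence of finite partitions $\beta_n$ of $Y$ with $\bigvee_n\beta_n=\Borel(Y)\bmod\nu$, and put $\gamma_n=\pi^{-1}(\beta_n)$, a finite partition of $X$ that is measurable with respect to $\pi^{-1}(\Borel(Y))$. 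By \eqref{eq:alpha_relative_generator} the finite partitions $\alpha_n\vee\gamma_n$ increase to a partition generating $\Borel(X)\bmod\mu$, so by the Kerr--Li fact that finite partitions generating $\bmod\mu$ compute sofic entropy, $h_\Gamma^\Sigma(X,\mu)=\lim_n h_\Gamma^\Sigma(\alpha_n\vee\gamma_n)$; likewise $h_\Gamma^\Sigma(Y,\nu)=\lim_n h_\Gamma^\Sigma(\beta_n)$. Here and below $h_\Gamma^\Sigma(\mathcal P)$ denotes the sofic entropy (relative to $\Sigma$) of a finite observable $\mathcal P$.

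The heart of the matter is the microstate counting inequality
\begin{equation*}
h_\Gamma^\Sigma(\alpha'\vee\gamma)\ \le\ h_\Gamma^\Sigma(\gamma)+H_\mu(\alpha'\mid\gamma),
\end{equation*}
valid for any two finite partitions $\alpha',\gamma$ of $(X,\mu)$. To prove it, fix a finite $F\subset\Gamma$ and $\delta>0$, and let $\sigma_n\colon\Gamma\to\mathrm{Sym}(V_n)$ denote the sofic approximation. A $\delta$-good $F$-model $\psi\colon V_n\to A_{\alpha'}\times A_\gamma$ for $\alpha'\vee\gamma$ restricts, on passing to the $A_\gamma$-marginal, to a $\delta$-good $F$-model $\phi$ for $\gamma$; and goodness forces the joint empirical distribution of the label pairs $(\psi_{\alpha'}(v),\phi(v))$ over $v\in V_n$ to be $\delta$-close to the law of $(\alpha',\gamma)$ under $\mu$. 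Hence, for a fixed good model $\phi$ of $\gamma$, every good extension $\psi$ has $\alpha'$-labels whose empirical distribution on each $\gamma$-fibre $\phi^{-1}(a)$ is close to the conditional distribution $\mu(\alpha'\mid\gamma=a)$, and a standard multinomial count (Stirling's formula) bounds the number of such extensions by $\exp\bigl(|V_n|\,(H_\mu(\alpha'\mid\gamma)+\varepsilon(\delta))\bigr)$, where $\varepsilon(\delta)\to0$ as $\delta\to0$. Summing over $\phi$ gives
\begin{equation*}
\bigl|\mathrm{AP}(\alpha'\vee\gamma,F,\delta,\sigma_n)\bigr|\ \le\ \bigl|\mathrm{AP}(\gamma,F,\delta,\sigma_n)\bigr|\cdot\exp\bigl(|V_n|\,(H_\mu(\alpha'\mid\gamma)+\varepsilon(\delta))\bigr);
\end{equation*}
taking $\tfrac1{|V_n|}\log$, then $\limsup_n$, then the infimum over $(F,\delta)$, and finally letting $\delta\to0$ yields the displayed inequality.

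Now assemble the pieces. Applying the counting inequality with $\alpha'=\alpha_n$, $\gamma=\gamma_n$ and using $\alpha_n\le\alpha$, we obtain
\begin{equation*}
h_\Gamma^\Sigma(\alpha_n\vee\gamma_n)\ \le\ h_\Gamma^\Sigma(\gamma_n)+H_\mu(\alpha_n\mid\gamma_n)\ \le\ h_\Gamma^\Sigma(\gamma_n)+H_\mu(\alpha\mid\gamma_n).
\end{equation*}
Since $\gamma_n=\pi^{-1}(\beta_n)$ and $\pi$ carries $\mu$ to $\nu$, the $\gamma_n$-process of $(X,\mu)$ is isomorphic to the $\beta_n$-process of $(Y,\nu)$, so $h_\Gamma^\Sigma(\gamma_n)=h_\Gamma^\Sigma(\beta_n)$ and therefore $h_\Gamma^\Sigma(\gamma_n)\to h_\Gamma^\Sigma(Y,\nu)$. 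Because $H_\mu(\alpha)<\infty$ and $\gamma_n\nearrow\pi^{-1}(\Borel(Y))$, the relative Shannon entropies $H_\mu(\alpha\mid\gamma_n)$ converge to $H_\mu(\alpha\mid\pi^{-1}(\Borel(Y)))$ by the convergence recalled in Section~\ref{Sec:prelim}. Passing to the limit in $n$ and using $h_\Gamma^\Sigma(X,\mu)=\lim_n h_\Gamma^\Sigma(\alpha_n\vee\gamma_n)$ gives
\begin{equation*}
h_\Gamma^\Sigma(X,\mu)\ \le\ h_\Gamma^\Sigma(Y,\nu)+H_\mu(\alpha\mid\pi^{-1}(\Borel(Y))),
\end{equation*}
which is the assertion.

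The main obstacle is the microstate fibre estimate in the second paragraph: one must turn $\delta$-closeness of the joint empirical distribution into genuine closeness of the conditional empirical distributions on $\gamma$-fibres, and then into an exponential count whose exponent is exactly $H_\mu(\alpha'\mid\gamma)$ up to an error vanishing with $\delta$. The remaining work — keeping the order of the limits straight (first $|V_n|\to\infty$, then $\delta\to0$ and $F\nearrow\Gamma$, then $n\to\infty$ along the generating sequence) and invoking the precise form of the Kerr--Li description of sofic entropy via finite partitions generating $\bmod\mu$ — is routine but requires care.
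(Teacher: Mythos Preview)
The paper does not give its own proof of this proposition: immediately before the statement it attributes the result to Alpeev--Seward \cite[Proposition~8.8]{1705.09707} and Hayes \cite[Proposition~2.7(i)]{hayes2016relative}, and immediately afterwards notes that the stated form is ``a simplified and slightly less general form'' of what appears there. So there is no in-paper argument to compare against.

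That said, your sketch does isolate the correct mechanism --- project good microstates for a relative generator of $X$ down to good microstates for $Y$, and bound each fibre by a type-class (multinomial) count governed by conditional Shannon entropy --- and this is indeed the engine behind the cited proofs. Two points deserve a warning. First, the reduction step asserts $h_\Gamma^\Sigma(\beta_n)\to h_\Gamma^\Sigma(Y,\nu)$ for an increasing sequence of finite partitions generating $\Borel(Y)\bmod\nu$. Because sofic entropy can strictly increase under factor maps (the Ornstein--Weiss phenomenon), individual terms $h_\Gamma^\Sigma(\beta_n)$ may well exceed $h_\Gamma^\Sigma(Y,\nu)$, so the convergence is not the triviality your phrasing suggests; it is a genuine consequence of the Kerr--Li equivalence-of-definitions machinery. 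The cited references sidestep this by working directly with a single generating observable for $Y$ and refining the conditioning to $\pi^{-1}(\Borel(Y))$ by enlarging the window $F$ inside the microstate ``goodness'' parameter, rather than by varying the partition. Second, your last limit invokes $H_\mu(\alpha)<\infty$, which the proposition does not assume; when $H_\mu(\alpha)=\infty$ but $H_\mu(\alpha\mid\pi^{-1}(\Borel(Y)))<\infty$ the bound $H_\mu(\alpha_n\mid\gamma_n)\le H_\mu(\alpha\mid\gamma_n)$ becomes vacuous, and one must decouple the limits in $m$ (for $\alpha_m$) and $n$ (for $\gamma_n$) rather than take the diagonal.
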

The above  is a simplified and slightly less general form of the corresponding statements from \cite{1705.09707,hayes2016relative}.

The following proposition on sofic entropy of finite-to-one extension follows from the main theorem of Hayes' work \cite{MR3635672} relating sofic entropy and spectral properties of actions. That work establishes much more general results. In particular, the statement below will hold with compact extensions instead of finite-to-one.  See the discussion following Proposition $5.7$ in \cite{1704.06349}. The case needed in our exposition allows for a short combinatorial proof, which we include for the sake of self-containment.

\begin{prop}\label{prop:finite_to_one_sofic_entropy}
	Let $\Gamma$ be a countably infinite sofic group with sofic approximation $\Sigma$, and let $\pi:X \to Y$ be a finite-to-one factor map between the measure preserving actions  $\act{\Gamma}{(X,\mu)}$ and $\act{\Gamma}{(Y,\nu)}$. Then 
    \begin{equation}\label{eq:h_mu_sofic_X_Y}
	h_\Gamma^\Sigma(X,\mu)\le h_\Gamma^\Sigma(Y,\nu).
	\end{equation}
\end{prop}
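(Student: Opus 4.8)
The plan is to reduce the statement to an application of the Abramov-Rokhlin sub-addition formula (Proposition \ref{prop:abramov_rokhlin_sodic}) combined with a pointwise-entropy argument. Since $\pi:X\to Y$ is finite-to-one, for each $y\in Y$ the fibre $\pi^{-1}(y)$ is a finite set, and by a standard measurable selection / partition-of-the-fibre argument one can find, for each $N$, a measurable partition $\alpha_N$ of $X$ such that on the set $\{y : |\pi^{-1}(y)| \le N\}$ the partition $\alpha_N$ separates the points of each fibre; equivalently $\alpha_N \vee \pi^{-1}(\Borel(Y))$ generates the full $\sigma$-algebra over that set. More carefully, since $\pi$ is finite-to-one $\mu$-a.e., the function $y \mapsto |\pi^{-1}(y)|$ is finite $\nu$-a.e., so $\nu(\{y : |\pi^{-1}(y)| \le N\}) \to 1$ as $N\to\infty$; one then builds a genuine relative generator $\alpha$ (satisfying \eqref{eq:alpha_relative_generator}) by combining the $\alpha_N$'s across a countable exhaustion, at the cost of controlling $H_\mu(\alpha \mid \pi^{-1}(\Borel(Y)))$.

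The heart of the matter is therefore the estimate on the relative Shannon entropy $H_\mu(\alpha \mid \pi^{-1}(\Borel(Y)))$. The key observation is that this relative entropy is an integral over $Y$ of the Shannon entropy of the induced partition of the finite fibre $\pi^{-1}(y)$ with respect to the conditional measure $\mu_y$, and the Shannon entropy of a partition of a finite set with $k$ points is at most $\log k$. Thus
\[
H_\mu(\alpha \mid \pi^{-1}(\Borel(Y))) \le \int_Y \log |\pi^{-1}(y)| \, d\nu(y).
\]
A priori this integral need not be small — it is merely finite when $y\mapsto \log|\pi^{-1}(y)|$ is integrable, and could even be infinite. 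To get around this I would pass to a suitable \emph{speedup along a F\o lner-type averaging}: replace $\alpha$ by $\alpha^{F_n}$-relative generators over larger and larger finite sets, or better, exploit that sofic entropy is subadditive under the $n$-fold fibre product trick. Concretely: for a finite-to-one extension one has, for each $n$, a relative generator whose relative entropy is at most $\tfrac1n$ times an integral of the form $\int \log(\text{number of }n\text{-orbital configurations})$, and because $\Gamma$ is infinite the combinatorial count of distinguishable fibre configurations grows subexponentially in $n$ (this is exactly where infiniteness of $\Gamma$ enters, mirroring the amenable case in Theorem \ref{thm:prime_extensions_have_zero_entropy}). Feeding $\alpha$ with $H_\mu(\alpha\mid \pi^{-1}(\Borel(Y))) < \varepsilon$ into Proposition \ref{prop:abramov_rokhlin_sodic} gives $h_\Gamma^\Sigma(X,\mu) \le h_\Gamma^\Sigma(Y,\nu) + \varepsilon$ for every $\varepsilon>0$, which yields \eqref{eq:h_mu_sofic_X_Y}.

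The main obstacle I anticipate is producing a relative generator with \emph{arbitrarily small} relative entropy from a finite-to-one extension when the fibre sizes are unbounded and $\log$ of the fibre size is not integrable; the naive one-shot partition only gives a fixed (possibly large or infinite) bound. The combinatorial content of the promised ``short combinatorial proof'' must be precisely a counting lemma showing that, using the $\Sigma$-approximating finite models and the fact that $\Gamma$ is infinite, the number of fibre-labellings consistent with a given $Y$-configuration grows sub-exponentially in the size of the F\o lner/sofic window, so that after normalization the relative entropy contribution vanishes. Once that counting lemma is in place, everything else is routine bookkeeping with conditional Shannon entropy and an appeal to Proposition \ref{prop:abramov_rokhlin_sodic}.
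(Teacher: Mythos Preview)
Your overall framework is right: apply Proposition~\ref{prop:abramov_rokhlin_sodic} with a relative generator $\alpha$ whose conditional entropy $H_\mu(\alpha\mid\pi^{-1}(\Borel(Y)))$ is arbitrarily small. But the mechanism you propose for making that entropy small is where the argument breaks down. The ``F{\o}lner-type speedup'' is not available, since $\Gamma$ need not be amenable, and the ``$n$-fold fibre product trick'' and ``counting lemma in the sofic window'' remain gestures rather than statements; in particular, you never say what quantity you are counting or why infiniteness of $\Gamma$ forces subexponential growth. As written, the proposal does not produce a single relative generator with small conditional entropy, and the reduction to Proposition~\ref{prop:abramov_rokhlin_sodic} cannot be completed.

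The paper's actual construction is quite different and does not touch the sofic approximation at all. The key idea you are missing is a \emph{marker-set} argument exploiting aperiodicity. One first reduces to the case where $\act{\Gamma}{(Y,\nu)}$ is aperiodic (by taking a direct product with a Bernoulli action and using Bowen's additivity $h_\Gamma^\Sigma(\cdot\times Z)=h_\Gamma^\Sigma(\cdot)+h_\Gamma^\Sigma(Z)$ to cancel the Bernoulli contribution). Under aperiodicity, for each stratum $Y_n=\{y:|\pi^{-1}(y)|=n\}$ one can choose a set $A_n\subset Y_n$ of arbitrarily small measure whose $\Gamma$-orbit covers $Y_n$. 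Using a Borel ranking of each fibre, one defines a permutation cocycle $\Pi_{g,y}$ on $\{1,\dots,n\}$ and takes $\alpha$ to record the fibre rank \emph{only over the small sets $A_n$}. This $\alpha$ has $H_\mu(\alpha)$ as small as one likes, yet is a relative generator because any $x$ can be pushed by some $g\in\Gamma$ into $\pi^{-1}(A_n)$, where its rank is read off, and then transported back via $\Pi_{g^{-1},\pi(gx)}$. This is the ``short combinatorial'' content: the combinatorics is that of the fibre-permutation cocycle, not of counting sofic microstates.
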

\begin{proof}
	Because $\pi:X \to Y$ is finite-to-one, by removing  null sets we can assume that  $|\pi^{-1}(\{y\})|$ is a positive integer for every $y \in Y$.
	For every $n \in \NN$ let $$Y_n = \{y \in Y:~ \pi^{-1}(\{y\}) =n\} \mbox{ and } X_n = \pi^{-1}(Y_n).$$
	
Let us first assume in addition that $\act{\Gamma}{(Y,\nu)}$ is aperiodic.
By the Abramov-Rokhlin sub-addition formula for sofic entropy (Proposition \ref{prop:abramov_rokhlin_sodic}) it is enough to show that for every $\epsilon >0$ we can find a measurable partition $\alpha$ of $X$ that satisfies \eqref{eq:alpha_relative_generator}
and $H_\mu(\alpha) < \epsilon$.
Because $\act{\Gamma}{(Y,\nu)}$ is aperiodic, so is  $\act{\Gamma}{(X,\nu)}$. So $(X,\mu)$ is a standard probability space with no atoms.
Let $\phi:X \to [0,1]$ be a Borel bijection. 

Because $\act{\Gamma}{(Y,\nu)}$ is aperiodic  whenever $\nu(Y_n) >0$ we can find arbitrary small $\epsilon_n >0$ and  a Borel measurable set $A_n \subset Y_n$ so that $\bigcup_{g \in\Gamma}g(A_n)= Y_n$ and  $\nu(A_n)=\epsilon_n$ (for instance by considering the ergodic decomposition of $\act{\Gamma}{(Y,\nu)}$). 
Because $\pi:X \to Y$ is finite-to-one  we can define $\psi:X \to \NN$ by
$$\psi(x)= \# \left\{x' \in \pi^{-1}\left(\{\pi(x)\}\right):~ \phi(x')  \le \phi(x) \right\}.$$
Then for every $y \in Y_n$,  $\psi$ induces a bijection between  $\pi^{-1}\left(\{y\}\right)$ and $\{1,\ldots,n\}$.
We denote the  inverse  by $$\psi_{y}^{-1}:\{1,\ldots,n\} \to \pi^{-1}\left(\{y\}\right).$$
Also we have  a Borel bijection $\Psi:X \to \bigcup_{i=1}^\infty \left(Y_n \times \{1,\ldots,n\}\right)$
given by
$$\Psi(x)= (\pi(x),\psi(x)).$$
For $y \in Y_n$ and $g \in \Gamma$ define a permutation $\Pi_{g,y}$ of $\{1,\ldots,n\}$
by
$$\Pi_{g,y}(i) = \psi\left(g(\psi_{y}^{-1}(i))\right) \mbox{ for } i \in \{1,\ldots,n\}.$$
Then the map $(g,y) \mapsto \Pi_{g,y}$ is a Borel map from $\Gamma \times Y_n$ to the set of permutations on $\{1,\ldots,n\}$.
For every $n \in \NN$ and $i \in \{1,\ldots,n\}$ let 
$$B_{n,i}= \pi^{-1}(A_n) \cap \psi^{-1}(\{i\}).$$
Consider the partition $\alpha$ of $X$ given by
$$\alpha =\left\{ B_{n,i}:~ n \in \NN \mbox{ and } 1 \le i \le n \right\} \cup \{X \setminus \bigcup_{n \in \NN}\pi^{-1}(A_n)\}.$$
It follows that  
$$H_\mu(\alpha) 
< \sum_{n=1}^{\infty}\epsilon_n\left(\log(\epsilon_n^{-1})+\log(n) \right)+\log\left(1-\sum_{n=1}^\infty \epsilon_n\right),$$
So if $(\epsilon_n)_{n=1}^\infty$ are sufficiently small $H_\mu(\alpha) < \epsilon$.
In order to check that $\alpha$ is a relative generator in the sense that it satisfies \eqref{eq:alpha_relative_generator}, it suffices to check that 
$\psi:X \to \NN$ is $ \alpha^\Gamma \vee \pi^{-1}(\Borel(Y))$-measurable. Indeed, because $Y_n = \bigcup_{g \in \Gamma}g (A_n)$ for any $x \in X_n$ there exists $g \in \Gamma$ and $i  \in \{1,\ldots,n\}$ so that $g(x) \in B_{n,i}$. For such $x$ we have
 $\psi(g(x))=i$ so $\psi^{-1}_{g(\pi(x))}(i)=g(x)$ so 
$$\psi(x)= \psi(g^{-1}(g(x)))=\psi(g^{-1}(\psi^{-1}_{g(\pi(x))}(i)))=\Pi_{g^{-1},\pi(g \cdot x)}(i).$$
This implies the desirable measurability.

Now let us remove the additional assumption that $\act{\Gamma}{(Y,\nu)}$ is aperiodic.
Let $\act{\Gamma}{(Z,\eta)}$ be a Bernoulli action.
Then $\pi:X \to Y$ naturally induces a finite-to-one factor map from $\act{\Gamma}{(X \times Z,\mu \times \eta)}$ to $\act{\Gamma}{(Y \times Z,\nu \times \eta)}$. Since $\act{\Gamma}{(Y \times Z,\nu \times \eta)}$ is aperiodic we can use the first part to conclude that 
$$h_\Gamma^\Sigma(X \times Z, \mu \times \eta) \le h_\Gamma^\Sigma(Y \times Z, \mu \times \eta).$$
By  \cite[Theorem $8.1$]{MR2552252}, because $(Z,\eta)$ is a  Bernoulli action
\begin{equation*}\label{eq:sofic_entropy_bernoulli_addition_formula}
h_\Gamma^\Sigma(X \times Z, \mu \times \eta)= h_\Gamma^\Sigma(X,\mu)+h_\Gamma^{\Sigma}(Z,\eta),
\end{equation*}
and
\begin{equation*}
h_\Gamma^\Sigma(Y \times Z, \mu \times \eta)= h_\Gamma^\Sigma(Y,\mu)+h_\Gamma^{\Sigma}(Z,\eta).
\end{equation*}
This proves \eqref{eq:h_mu_sofic_X_Y} without assuming that  $\act{\Gamma}{(Y,\nu)}$ is aperiodic.
\end{proof}

\begin{thm}\label{thm:prime_extensions_sofic_entropy}
	Suppose $\Gamma$ is a sofic group with sofic approximation $\Sigma$ that acts on $X$ and $Y$ and that  $\pi:X \to Y$ is a topologically prime extension.
	 Then 
	\begin{equation}\label{eq:h_sofic_X_Y}
	h_\Gamma^\Sigma(X)\le h_\Gamma^\Sigma(Y).
	\end{equation}
	In particular, topologically prime actions of sofic groups have non-positive sofic entropy.
\end{thm}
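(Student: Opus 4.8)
The plan is to rerun the proof of Theorem~\ref{thm:prime_extensions_have_zero_entropy} with each amenable-group ingredient replaced by its sofic analogue. By the Kerr--Li variational principle for sofic entropy \cite{MR2854085} we have $h_\Gamma^\Sigma(X) = \sup\{ h_\Gamma^\Sigma(X,\mu) :~ \mu \in \Prob_\Gamma(X)\}$ (with $\sup\emptyset = -\infty$), and likewise for $Y$. Hence, if $\Prob_\Gamma(X) = \emptyset$ there is nothing to prove, and otherwise it is enough to show that $h_\Gamma^\Sigma(X,\mu) \le h_\Gamma^\Sigma(Y,\pi(\mu))$ for every $\mu \in \Prob_\Gamma(X)$: taking the supremum over such $\mu$ and noting that $\pi(\mu)$ ranges inside $\Prob_\Gamma(Y)$ then yields \eqref{eq:h_sofic_X_Y}. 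So fix $\mu \in \Prob_\Gamma(X)$ and write $\nu = \pi(\mu)$.

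First I would treat the case where $\mu$ satisfies the non-degeneracy condition \eqref{eq:mu_cont_not_isomorphism}. Exactly as in the proof of Theorem~\ref{thm:prime_extensions_have_zero_entropy}, using the density of $\mu$-continuous partitions in the Rokhlin metric (Hochman), for each $\epsilon > 0$ I can choose a $\mu$-continuous partition $\alpha$ with $0 < H_\mu(\alpha \mid \pi^{-1}(\Borel(Y))) < \epsilon$. The strict positivity means $\alpha$ is not contained in the $\mu$-completion of $\pi^{-1}(\Borel(Y))$, so the continuous function defining $\alpha$ lies in $C(X) \setminus \pi_*(C(Y))$; since $\pi$ is a topologically prime extension, the $\Gamma$-invariant $C^*$-algebra generated by this function and $\pi_*(C(Y))$ is all of $C(X)$, and therefore $(\alpha \vee \pi^{-1}(\Borel(Y)))^\Gamma = \Borel(X) \bmod \mu$. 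Thus $\alpha$ is a relative generator in the sense of \eqref{eq:alpha_relative_generator}, and the Abramov--Rokhlin sub-addition formula for sofic entropy (Proposition~\ref{prop:abramov_rokhlin_sodic}) gives
\[
h_\Gamma^\Sigma(X,\mu) \le h_\Gamma^\Sigma(Y,\nu) + H_\mu(\alpha \mid \pi^{-1}(\Borel(Y))) < h_\Gamma^\Sigma(Y,\nu) + \epsilon .
\]
Letting $\epsilon \to 0$ gives $h_\Gamma^\Sigma(X,\mu) \le h_\Gamma^\Sigma(Y,\nu)$.

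Next I would treat the remaining $\mu$, for which \eqref{eq:mu_cont_not_isomorphism} fails. As noted in the proof of Theorem~\ref{thm:prime_extensions_have_zero_entropy}, the negation of \eqref{eq:mu_cont_not_isomorphism} forces $\pi$ to be (boundedly) finite-to-one $\mu$-almost everywhere: decomposing $\mu$ into its continuous and purely atomic $\Gamma$-invariant parts, $\pi$ restricts to a measure isomorphism on the continuous part, hence is finite-to-one off a $\mu$-null set. Proposition~\ref{prop:finite_to_one_sofic_entropy} --- where the hypothesis that $\Gamma$ is infinite enters --- then yields $h_\Gamma^\Sigma(X,\mu) \le h_\Gamma^\Sigma(Y,\nu)$ in this case too. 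This completes the case analysis and proves \eqref{eq:h_sofic_X_Y}; the final assertion follows by taking $\act{\Gamma}{Y}$ to be the trivial action on a one-point space, for which $h_\Gamma^\Sigma(Y) = 0$. I do not expect a genuine obstacle here --- the substantive inputs are already isolated in Propositions~\ref{prop:abramov_rokhlin_sodic} and~\ref{prop:finite_to_one_sofic_entropy}; the points needing attention are merely the bookkeeping around the $-\infty$ convention in the variational principle and the (verbatim amenable-case) step upgrading topological primeness to the measure-theoretic relative-generator property.
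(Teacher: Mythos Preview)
Your proposal is correct and follows essentially the same approach as the paper: the paper's proof likewise reruns the argument of Theorem~\ref{thm:prime_extensions_have_zero_entropy} measure by measure, replacing the Abramov--Rokhlin addition formula by Proposition~\ref{prop:abramov_rokhlin_sodic} in the non-degenerate case and invoking Proposition~\ref{prop:finite_to_one_sofic_entropy} when \eqref{eq:mu_cont_not_isomorphism} fails. Your explicit handling of the variational principle (including the $-\infty$ convention) is a harmless elaboration of what the paper leaves implicit.
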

\begin{proof}
We obtain the proof essentially by repeating the proof of Theorem \ref{thm:prime_extensions_have_zero_entropy}, with the following modifications:
Instead of applying the Abramov-Rokhlin entropy addition formula we apply the  Abramov-Rokhlin sub-addition formula for sofic entropy (Proposition \ref{prop:abramov_rokhlin_sodic}). To deal with the case where \eqref{eq:mu_cont_not_isomorphism} fails, we apply Proposition \ref{prop:finite_to_one_sofic_entropy} above.
\end{proof}


We note that in general we cannot  conclude an equality instead of  the equality \eqref{eq:h_sofic_X_Y} under the assumptions of Theorem \ref{thm:prime_extensions_sofic_entropy}. 
From the definition of sofic entropy, if $\act{X}{\Gamma}$ admits no invariant probability measures then $h_\Gamma^\Sigma(X) = - \infty$.  
The following is an example of a topologically prime action of the free group on two generators that admits no invariant probability measure:

\begin{example}
	Let $T_1:X \to X$ be a topologically prime homeomorphism that is uniquely ergodic (for instance Kings's example \cite{MR1091424} on the cantor set $X$), and let $T_2:X \to X$ be a homeomorphism that does not preserve the unique $T_1$-invariant measure. Consider the action on $X$ of the free group generated by $T_1$ and $T_2$. This is a topologically prime action because it has $T_1$ as a subaction. It  admits no invariant probability measure, because the unique $T_1$-invariant measure is not $T_2$-invariant.
\end{example}

\begin{remark}
	Although finite-to-one  extensions can never increase sofic entropy  they can certainly decrease it, as in  the well known Ornstein-Weiss example of a two-to-one factor map from the Bernoulli $2$-shift to the Bernoulli $4$-shift over the free group. 
\end{remark}
\begin{remark}
	Using the Abramov-Rokhlin sofic entropy sub-addition formula and similar arguments  as in the proof of Proposition \ref{prop:finite_to_one_sofic_entropy} it is possible  to prove the following ``atomless'' refinement of the Abramov-Rokhlin entropy sub-addition formula as follows: If $\mu= p \mu_c + (1-p)\mu_a$ is the representation of $\mu$ as a convex combination of a continuous measure $\mu_c$ and a purely atomic measure $\mu_a$ and
    $\alpha$ is a measurable partition of $X$ such that
	$$(\alpha \vee \pi^{-1}(\Borel(Y)))^\Gamma \supset \Borel(X) \mod \mu_c,$$
	then
	$$h_\Gamma^\Sigma(X,\mu) \le h_\Gamma^\Sigma(Y,\nu) + H_{\mu}(\alpha \mid \pi^{-1}(\Borel(Y)).$$
\end{remark}

\bibliographystyle{abbrv}
\bibliography{predictability_groups}
\end{document}